\setlist[enumerate]{label=\emph{(\roman*)}}
\newtheorem{theorem}{Theorem}[section]
\newtheorem{lemma}[theorem]{Lemma}
\newtheorem{proposition}[theorem]{Proposition}
\theoremstyle{definition}
\newtheorem{definition}[theorem]{Definition}
\newtheorem{remark}[theorem]{Remark}
\numberwithin{equation}{section}
\newcommand{\R}{\mathbb{R}}
\newcommand{\dd}{{\rm d}}
\begin{document}

\title[2D mass-critical ZK equation]{Nonexistence of minimal mass blow-up solution for the 2D cubic Zakharov-Kuznetsov equation}

\author{Gong Chen}
\address{School of Mathematics, Georgia Institute of Technology, Atlanta, GA, 30332-0160, USA.}
\email{gc@math.gatech.edu}

\author{Yang Lan}
\address{Yau Mathematical Sciences Center, Tsinghua University, 100084 Beijing, P. R. China.}
\email{lanyang@mail.tsinghua.edu.cn}

\author{Xu Yuan}
\address{Academy of Mathematics and Systems Science, Chinese Academy of Sciences, Beijing 100190, P. R. China.}
\email{xu.yuan@amss.ac.cn}

\begin{abstract}
For the 2D cubic (mass-critical) Zakharov-Kuznetsov equation,
\begin{equation*}
\partial_t\phi+\partial_{x_1}(\Delta \phi+\phi^3)=0,\quad (t,x)\in [0,\infty)\times \mathbb{R}^{2},
\end{equation*}
we prove that there exist no finite/infinite time blow-up solution with minimal mass in the energy space. This nonexistence result is in contrast to the one obtained by Martel-Merle-Raphaël~\cite{MMR1} for the mass-critical generalized Korteweg-de Vries (gKdV) equation. The proof relies on a refined ODE argument related to the modulation theory and a modified energy-virial Lyapunov functional with a monotonicity property.
\end{abstract}

\maketitle

\section{Introduction}
\subsection{Main result}
In this note, we consider the minimal mass dynamics for the following 2D cubic Zakharov-Kuznetsov equation,
\begin{equation}\label{equ:CP}
\partial_t\phi+\partial_{x_1}(\Delta \phi+\phi^3)=0,\quad (t,x)\in [0,\infty)\times \mathbb{R}^{2}.
\end{equation}
Recall that, the Cauchy problem is locally well-posed in the energy space $H^{1}(\R^{2})$ (see~\cite{LinPas,RibaudVento} and reference therein). More precisely, for any initial data $\phi_{0}\in H^{1}(\R^{2})$, there exists a unique (in a certain sense) maximal solution $\phi(t)$ of~\eqref{equ:CP} in $C([0,T);H^{1}(\R^{2}))$ and 
\begin{equation}\label{equ:blowCauchy}
    T<\infty\quad \mbox{implies}\quad 
    \|\nabla \phi(t)\|_{L^{2}}\to \infty\ \ \mbox{as}\ \ 
    t\uparrow T.
\end{equation}
Recall also that, $H^{1}$ solutions satisfy the conservation of mass and energy:
    \begin{equation*}
M(\phi(t))=\int_{\R^{2}}|\phi(t,x)|^{2}\dd x\quad \mbox{and}\quad 
E(\phi(t))=\int_{\R^{2}}\left(\frac{1}2|\nabla \phi(t,x)|^{2}-\frac{1}{4}|\phi(t,x)|^{4}\right)\dd x.
\end{equation*}
The symmetry group for $H^{1}$ solution of~\eqref{equ:CP} is given by
\begin{equation*}
  \phi(t,x) \mapsto  \sigma_{0}\lambda_{0}\phi\left(\lambda_{0}^{3}(t-t_{0}),\lambda_{0}(x_{1}-x_{1,0}),\lambda_{0}(x_{2}-x_{2,0})\right),
\end{equation*}
where $\left(\sigma_{0},\lambda_{0},t_{0},x_{1,0},x_{2,0}\right)\in \left\{-1,1\right\}\times\R^{+}\times \R\times\R^{2}$. In particular, the scaling symmetry keeps the $L^{2}$-norm invariant and hence the problem is \emph{mass-critical}.

\smallskip
The family of \emph{soliton/traveling wave} solutions
\begin{equation*}
    \phi(t,x)=\lambda_{0}Q\left(\lambda_{0}(x_{1}-x_{1,0}-\lambda_{0}^{2}t),\lambda_{0}(x_{2}-x_{2,0})\right),
\end{equation*}
plays an important role in our analysis of the minimal mass blow-up dynamics. Here, we denote by $Q(x)=q(|x|)$ the ground state of~\eqref{equ:CP} where $q>0$ satisfies
\begin{equation*}
    -q''-\frac{q'}{r}+q-q^{3}=0,\quad q'(0)=0\quad \mbox{and}\quad 
    \lim_{r\to \infty}q(r)=0.
\end{equation*}
It is well-known and easily checked that, for any $n\in \mathbb{N}$,
\begin{equation*}
    \left|q^{(n)}(r)\right|\lesssim r^{-\frac{1}{2}}e^{-r},\quad \mbox{for}\ r>1.
\end{equation*}
From the variational argument in~\cite{Weinstein}, the ground state $Q$ is related to the best constant in the following  Gagliardo-Nirenberg inequality
\begin{equation*}
    \int_{\R^{2}}|f(x)|^{4}\dd x
    \le 2\left(\int_{\R^{2}}|\nabla f(x)|^{2}\dd x\right)
    \left(\frac{\int_{\R^{2}}|f(x)|^{2}\dd x}{\int_{\R^{2}}|Q(x)|^{2}\dd x}\right),\quad \mbox{for any}\ f\in H^{1}(\R^{2}).
\end{equation*}
Following the above inequality, the conservation of energy and the blow-up criterion~\eqref{equ:blowCauchy} imply that any initial data $\phi_{0}\in H^{1}(\R^{2})$ with subcritical mass, \emph{i.e.} satisfying $\|\phi_{0}\|_{L^{2}}<\|Q\|_{L^{2}}$, generates a \emph{global and bounded} solution in $H^{1}(\R^{2})$.

\smallskip
The study of singularity formation for small supercritical mass $H^{1}$ initial data 
\begin{equation}\label{equ:massabove}
    \|Q\|_{L^{2}}<\|\phi_{0}\|_{L^{2}}<\|Q\|_{L^{2}}+\delta^{*},\quad 0<\delta^{*}\ll 1,
\end{equation}
has been developed in Farah-Holmer-Roudenko-Yang~\cite{FHRY}. More precisely, they show that for any initial data $\phi_{0}\in H^{1}(\R^{2})$ with $E(\phi_{0})<0$ and~\eqref{equ:massabove}, the corresponding solution $\phi(t)$ blows up in finite/infinite forward time. To the best of our knowledge, this is the first result for the study of singularity formation for~\eqref{equ:CP}.

\smallskip
In this note, we focus on the critical mass problem $\|\phi_{0}\|_{L^{2}}=\|Q\|_{L^{2}}$ and 
show that there exists no minimal mass blow-up solution. In other words, we prove that the subcritical mass condition is not sharp for the global existence of~\eqref{equ:CP}.

\smallskip
To state the main result, we recall the definition of minimal mass blow-up solution.
\begin{definition}\label{def:minimalmass}
We say that $\phi(t)$ is a minimal mass blow-up solution for~\eqref{equ:CP} if $\|\phi_{0}\|_{L^{2}}=\|Q\|_{L^{2}}$ and there exists $0<T\le \infty$ such that $\phi(t)\in C([0,T);H^{1})$ with 
\begin{equation*}
    \|\nabla \phi(t)\|_{L^{2}}\to \infty\quad \mbox{as}\quad t\uparrow T.
\end{equation*}
In addition, we say that $T$ is the blow-up time of $\phi(t)$.
\end{definition}

We now state the following main result of this note.
\begin{theorem}\label{thm:main}
    There exists no minimal mass blow-up solution for~\eqref{equ:CP}.
\end{theorem}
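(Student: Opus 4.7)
The plan is to argue by contradiction. Assume that $\phi\in C([0,T);H^1(\R^2))$ is a minimal mass blow-up solution, so $\|\phi_0\|_{L^2}=\|Q\|_{L^2}$ and $\|\nabla\phi(t)\|_{L^2}\to\infty$ as $t\uparrow T\le+\infty$. The sharp Gagliardo--Nirenberg inequality (whose extremizer is $Q$, up to symmetries) combined with the minimal mass hypothesis and the blow-up of the gradient forces a modulated concentration: a concentration-compactness argument produces parameters $\lambda(t)>0$ with $\lambda(t)\to 0$, $x(t)\in\R^2$, and $\sigma(t)\in\{\pm 1\}$ such that
\[
\phi(t,x)=\frac{\sigma(t)}{\lambda(t)}\bigl(Q+\varepsilon(t)\bigr)\!\left(\frac{x-x(t)}{\lambda(t)}\right),\qquad \|\varepsilon(t)\|_{H^1}\to 0\ \ \text{as}\ \ t\uparrow T,
\]
with $\varepsilon(t)$ uniquely pinned by orthogonality to the kernel directions $Q,\partial_{y_1}Q,\partial_{y_2}Q$ of the linearized operator $L=-\Delta+1-3Q^2$.

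Passing to the rescaled time $s$ defined by $\dd s/\dd t=1/\lambda^3$ and setting $b(s)=-\lambda_s/\lambda$, I would derive the modulation ODE for $(b,\lambda,x)$ by projecting the equation for $\varepsilon$ onto the kernel directions. Mass conservation together with $\langle\varepsilon,Q\rangle\approx 0$ upgrades the naive $L^2$-smallness of $\varepsilon$ to a quadratic bound, while energy conservation yields $E(Q+\varepsilon)=\lambda^2 E(\phi_0)$. Combined with the spectral coercivity of $L$ on the orthogonal complement of its kernel, this produces the sharp quantitative estimate $\|\varepsilon(s)\|_{H^1}\lesssim \lambda(s)$, which is the crucial input for the subsequent ODE analysis.

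The central ingredient is a modified energy-virial Lyapunov functional $\mathcal{F}(s)$, built from the quadratic form $\langle L\varepsilon,\varepsilon\rangle$ weighted by a smooth cutoff $\psi_A(y)$ and augmented with a localized $x_1$-virial correction of the schematic form $b(s)\int_{\R^2}\chi_A(y)\,\varepsilon^2\,\dd y$, for suitable weights depending on a large parameter $A$. I would prove that, for $A$ large and $\lambda$ small, $\mathcal{F}$ enjoys both the \emph{coercivity} $\mathcal{F}(s)\gtrsim \|\varepsilon(s)\|_{H^1}^2-C\lambda^{2}$ (using the spectral structure of $L$ modulo its kernel) and the \emph{monotonicity}
\[
\frac{\dd\mathcal{F}}{\dd s}\le -c\int_{\R^2}\bigl(|\nabla\varepsilon|^2+\varepsilon^2\bigr)\psi_A'(y)\,\dd y+C\lambda^{4}.
\]
The monotonicity rests on delicate cancellations between the linearized ZK flow $\partial_{x_1}L$, the virial correction, and the nonlinear remainder; it is the technical heart of the argument.

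Finally, integrating the Lyapunov inequality over $[s_0,+\infty)$ (note that $s\to\infty$ in both finite- and infinite-time blow-up regimes) together with the sharp bound $\|\varepsilon\|_{H^1}=O(\lambda)$ should close the modulation ODE system and force a strictly positive lower bound on $\lambda(s)$, contradicting $\lambda(s)\to 0$. The main obstacle I anticipate is Step~3: building a monotone Lyapunov functional in the anisotropic 2D geometry of ZK, where the virial weight must be chosen along the distinguished $x_1$-direction while the Laplacian acts isotropically, and where the linear and nonlinear error terms must be absorbed \emph{exactly} by the $O(\lambda^2)$ bound on $\|\varepsilon\|_{H^1}^2$. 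This balance is strictly more delicate than its 1D gKdV counterpart and is precisely the mechanism that rules out minimal mass blow-up here, in contrast to~\cite{MMR1}.
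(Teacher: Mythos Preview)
Your proposal captures the broad architecture---contradiction, modulation decomposition, energy--virial monotonicity---but misses the specific mechanism that actually produces the contradiction, and that mechanism is the entire point.

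The decisive object is a numerical constant $\theta\approx 1.66<2$, which enters through a \emph{refined} profile $Q_b=Q+b\chi_bP$ with $P$ solving $\partial_{y_1}\mathcal{L}P=\Lambda Q$. Decomposing around $Q_b$ (with $b$ an independent modulation parameter fixed by an extra orthogonality, not defined as $-\lambda_s/\lambda$) yields the approximate system $\lambda_s/\lambda+b\approx 0$ and $b_s+\theta b^2\approx 0$, so that $b/\lambda^\theta$ is essentially conserved along the flow. The energy--virial Lyapunov functional is used precisely to make this rigorous: it controls the error terms in the refined modulation equations well enough to show that $|b|/\lambda^\theta$ at two different times are comparable. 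On the other hand, the minimal-mass expansion of mass and energy gives $|b|+\|\varepsilon\|_{H^1}^2\lesssim\lambda^2E_0$, hence $|b|/\lambda^\theta\lesssim\lambda^{2-\theta}E_0\to 0$ as $\lambda\to 0$. Since $b<0$ strictly (from the mass identity), these two facts are incompatible. For mass-critical gKdV the identical computation gives $\theta=2$, the two bounds become compatible, and minimal mass blow-up indeed exists. Thus your final claim---that the Lyapunov monotonicity together with $\|\varepsilon\|_{H^1}=O(\lambda)$ ``forces a strictly positive lower bound on $\lambda$''---cannot be right as stated: both ingredients are equally available for gKdV, where no such lower bound holds.

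Two further gaps. First, decomposing around $Q$ with orthogonality to $Q,\partial_{y_1}Q,\partial_{y_2}Q$ is too coarse; the constant $\theta$ only becomes visible once you introduce the nonlocal tail $P$ and the additional parameter $b$ (with the paper's orthogonality set $(\varepsilon,Q)=(\varepsilon,Q^3)=(\varepsilon,\nabla Q)=0$). Second, the monotonicity regime for the energy--virial functional requires a priori polynomial-weighted control of $\varepsilon$ on the right in $y_1$; this is obtained by a separate $L^2$-monotonicity argument in the original variables (exploiting that the solution concentrates all its mass at a single point as $t\to T$), a preliminary step you do not mention.
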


\begin{remark}
It is worth mentioning that, in Theorem~\ref{thm:main},  we do not treat the case of global minimal mass solutions blowing up only along a sequence of time. Technically, we should first obtain the blow-up estimate for all time, and then we could establish the exponential decay on the right-hand side of $y_{1}$(see Section~\ref{SS:Mini}) which helps us enter the monotonicity regime to control the geometric parameters. This is similar to the case of the mass-critical gKdV equation (see~\cite[Page 1872]{MMR1}).
\end{remark}

\subsection {Previous results} 
Actually, the study of minimal mass blow-up solution for nonlinear dispersive equation has a long history, especially for nonlinear Schr\"odinger equation (NLS). Consider the following mass-critical NLS equation:
\begin{equation*}
    i\partial_{t}\phi +\Delta \phi +|\phi|^{\frac{4}{d}}\phi=0,\quad (t,x)\in [0,\infty)\times \R^{d}.
\end{equation*}
The \emph{explicit minimal mass blow-up solution} can be constructed by the pseudo-conformal symmetry of this equation. For any $t>0$, we denote 
\begin{equation*}
    S_{\rm{NLS}}(t,x)=\frac{1}{t^{\frac{d}{2}}}e^{-i\frac{|x|^{2}}{4t}-\frac{i}{t}}Q_{{\rm{NLS}}}\left(\frac{x}{t}\right),
\end{equation*}
where $Q_{\rm{NLS}}\in H^{1}(\R^{d})$ is the unique ground state of the mass-critical NLS equation. Then we have $S_{{\rm{NLS}}}(t)$ is a blow-up solution  with
\begin{equation*}
    \|S_{{\rm{NLS}}}(t)\|_{L^{2}}= \|Q_{{\rm{NLS}}}\|_{L^{2}}\quad \mbox{and}\quad  \|\nabla S_{{\rm{NLS}}}(t)\|_{L^{2}}\sim \frac{1}{t}\ \mbox{as}\ t\downarrow 0.
\end{equation*}
In particular, by the seminal work of Merle~\cite{Merlenls}, the blow-up solution $S_{{\rm{NLS}}}(t)$ is exactly the unique 
(up to the symmetries)
minimal mass blow-up solution to the mass-critical NLS equation. 
Recall that, for the 2D mass-critical Zakharov system,
\begin{equation*}
\left\{
\begin{aligned}
i\partial_{t}u&=-\Delta u+nu,\\
\frac{1}{c^{2}}\partial_{t}^{2}n&=\Delta n+\Delta |u|^{2},
\end{aligned}
\right.\quad (t,x)\in [0,\infty)\times \R^{2},
\end{equation*}
the pseudo-conformal symmetry does not exist, and there is no minimal mass blow-up solution (see Glangetas-Merle~\cite{Glangetas}). We refer to ~\cite{Banica, BW, Com, Krieger, Raphaelnls} for the constructions of minimal mass blow-up solutions for other NLS-type equations.
We also refer to Merle~\cite{Merlenonnls} for the nonexistence of minimal mass blow-up solution of a type of inhomogeneous mass-critical NLS equation.

\smallskip
We now recall the related results for the generalized Korteweg-de Vries equation (gKdV) which is the closest model related to~\eqref{equ:CP}. For the mass-critical gKdV equation
\begin{equation*}
    \partial_{t}\phi+\partial_{x}\left(\partial_{x}^{2}\phi+\phi^{5}\right)=0,\quad (t,x)\in [0,\infty)\times \R,
\end{equation*}
the \emph{existence and description} of the minimal mass blow-up solution were first studied by Martel-Merle-Rapha\"el~\cite{MMR1}, and then the description of such solution was sharpened by Combet-Martel~\cite{Comkdv}. More precisely, we denote by $Q_{\rm{KdV}}\in H^{1}(\R)$ the unique ground state of the mass-critical gKdV equation. Based on the work of~\cite{MMR1}, we know that there exists a unique solution $S_{{\rm{KdV}}}(t)$ on $(0,\infty)$ such that 
\begin{equation*}
    \|S_{{\rm{KdV}}}(t)\|_{L^{2}}= \|Q_{{\rm{KdV}}}\|_{L^{2}}\quad \mbox{and}\quad  \|\partial_{x} S_{{\rm{KdV}}}(t)\|_{L^{2}}\sim \frac{1}{t}\ \mbox{as}\ t\downarrow 0.
\end{equation*}
Then, the sharp asymptotics, both in the time and space variables, were derived in~\cite{Comkdv}, for any order derivative of $S_{\rm{KdV}}(t)$.
We refer to Martel-Merle~\cite{MMDUKE} for the nonexistence of minimal mass blow-up solution of mass-critical gKdV equation, assuming an $L^{2}$-decay on the right of the initial data. We also refer to~\cite{KK2,KSnls,MMR,MMR2, MDann} for related results of blow-up dynamics for mass-critical dispersive models.

\smallskip

Last, we briefly survey the literature related to the \emph{soliton dynamics} of Zakharov-Kuznetsov models. For the 2D quadratic Zakharov-Kuznetsov equation, the asymptotic stability of soliton and stability of multi-soliton was studied by C\^ote-Mun\~oz-Pilod-Simpson~\cite{CMPS} via energy-virial estimate. However, the numerical computation of the coercivity for a specific Schr\"odinger operator, which is related to such virial estimate, does not hold for the 3D case. Very recently, based on a regularized transformation and numerical computation, Farah-Holmer-Roudenko-Yang~\cite{FHRY1} deduced a new virial estimate and thus they could extend the asymptotic stability of soliton for the 3D quadratic Zakharov-Kuznetsov equation. It is worth mentioning here that, in Pilod-Valet~\cite{PV1}, the authors studied the dynamics of the collision of two solitary waves for the 2 and 3-dimensional Zakharov–Kuznetsov equation. On the study of blow-up dynamics for~\eqref{equ:CP}, we refer to our recent work~\cite{CLY} for the full description of the near soliton behavior.
We also refer to Bozgan-Ghoul-Masmoudi-Yang~\cite{BGM} for a similar result which was proved independently. More importantly, the two proofs use different energy-virial Lyapunov functionals. Finally, we also point out that in Trespalacios~\cite{JT}, assuming the solution blows up, the author derived a lower bound for the blow-up rate for the 2D cubic Zakharov-Kuznetsov equation using the local well-posedness theory and growth estimates.

\subsection{Strategy of the proof}
Let us give a brief insight into the proof of Theorem~\ref{thm:main}. Indeed, 
the proof is surprisingly simple and relies on an energy-virial estimate introduced in~\cite{CLY}. For the sake of contradiction, we first assume that a minimal mass blow-up solution $\phi(t)$ exists. Then, we consider the geometric decomposition on the minimal mass blow-up solution in the energy space $H^{1}(\R^{2})$:
\begin{equation*}
    \varepsilon(t,y)=\lambda(t)\phi\left(t,\lambda(t)y+x(t)\right)-Q_{b}(t)(y).
\end{equation*}
Here, $Q_{b}$ is close to $Q$ in energy space $H^{1}(\R^{2})$ for $b$ small enough.
We now change the time variable as follows:
\begin{equation*}
    s=\int_{0}^{t}\frac{\dd \sigma}{\lambda^{3}(\sigma)}
    \Longleftrightarrow
    \frac{\dd s}{\dd t}=\frac{1}{\lambda^{3}(t)}.
\end{equation*}
Heuristically, the geometric parameters $(\lambda,b)$ satisfy 
\begin{equation*}
    \frac{\lambda_{s}}{\lambda}=-b \ \ \mbox{and}\ \ 
    b_{s}+\theta b^{2}=0\Longrightarrow 
    \frac{\dd }{\dd s}\left(\frac{b}{\lambda^{\theta}}\right)=0.
\end{equation*}
Here, $\theta\approx 1.66$ is a constant related to the integration of $\Lambda Q$ (see Section~\ref{SS:Nota} for the definition of $\theta$).
This implies that the function $({b}/{\lambda^{\theta}})$ is a constant along the flow. Therefore, in the rigorous analysis, we could establish the lower and upper bounds of the function $({b}/{\lambda^{\theta}})$ via the monotonicity properties of $(\lambda,b,\varepsilon)$. On the other hand, from the expansions of mass and energy for the minimal mass blow-up solution $\phi(t)$, we could deduce the upper bound of the function $(b/\lambda^{2})$. Last, combining the above-mentioned estimates for $(b/\lambda^{\theta})$ and $(b/\lambda^{2})$ with $\lambda(t)\to 0$ as $t\uparrow T$, we reach a contradiction. We mention here that, due to the same reason, the general strategy for the construction of minimal mass blow-up solution does not work in our case now (see~\cite[Page 1877]{MMR1} for more details).

\subsection{Notation and conventions}\label{SS:Nota}
For any $\beta=(\beta_{1},\beta_{2})\in \mathbb{N}^{2}$, we denote 
\begin{equation*}
    |\beta|=|\beta_{1}|+|\beta_{2}|\quad \mbox{and}\quad 
    \partial_{y}^{\beta}=\frac{\partial^{|\beta|}}{\partial_{y_{1}}^{\beta_{1}}\partial_{y_{2}}^{\beta_{2}}}.
\end{equation*}

Denote by $\mathcal{Z}(\R^{d})$ by the set of functions $f\in C^{2}(\R^{d})$ such that, for any $n\in \mathbb{N}$ there exist $C_{n}>0$ and $r_{n}>0$ such that 
\begin{equation*}
    \sum_{|\beta|=n}\left|\partial_{y}^{\beta}f(y)\right|
    \le C_{n}(1+|y|)^{r_{n}}e^{-\frac{|y|}{2}},\quad \mbox{on}\ \R^{d}.
\end{equation*}
For any $f\in L^{2}(\R^{2})$ and $g\in L^{2}(\R^{2})$, we denote the $L^{2}$-scalar product by 
\begin{equation*}
    (f,g)=\int_{\R^{2}}f(x)g(x)\dd x.
\end{equation*}

Let $\chi:\R\to [0,1]$ be a $C^{\infty}$ nondecreasing function such that
\begin{equation*}
    \chi_{|(-\infty,-2)}\equiv 0\quad \mbox{and}\quad 
    \chi_{|(-1,\infty)}\equiv 1.
\end{equation*}

Let ${\textbf{e}}_{1}$ be the first vector of the canonical basis of $\R^{2}$, that is, ${\textbf{e}}_{1}=(1,0)$.

\smallskip
We introduce the generator of the scaling symmetry
\begin{equation*}
    \Lambda f=f+\nabla \cdot f,\quad \mbox{for any}\ f\in H^{1}(\R^{2}).
\end{equation*}
We also define the linearized operator $\mathcal{L}$ around the ground state by 
\begin{equation*}
    \mathcal{L}f=-\Delta f+f-3Q^{2}f,\quad \mbox{for any}\ f\in H^{1}(\R^{2}).
    \end{equation*}

Recall that, there exists $\mu_{1}>0$, such that for any function $f\in H^{1}(\R^{2})$, 
\begin{equation}\label{est:coer}
   \left( \mathcal{L}f,f\right)\ge \mu_{1}\|f\|_{H^{1}}^{2}-\frac{1}{\mu_{1}}\left[(f,Q^{3})^{2}+(f,\partial_{y_{1}}Q)^{2}+(f,\partial_{y_{1}}Q)^{2}\right].
\end{equation}

In addition, we define the Schr\"odinger operator $\mathcal{A}$ for the dual problem by 
\begin{equation*}
\begin{aligned}
    \mathcal{A}f
    &=-\frac{3}{2}\partial_{y_{1}}^{2}f-\frac{1}{2}\partial_{y_{2}}^{2}f-\frac{1}{2}\left(3Q^{2}+6y_{1}Q\partial_{y_{1}}Q\right)f\\
    &+\frac{3}{\|Q\|^{2}_{L^{2}}}\left[(f,Q^{2}\partial_{y_{1}}Q)y_{1}Q+
    (f,y_{1}Q)Q^{2}\partial_{y_{1}}Q\right].
    \end{aligned}
\end{equation*}
Recall that, from~\cite[Section 16]{FHRY}, there exists $\mu_{2}>0$ such that for any $f\in H^{1}(\R^{2})$,
\begin{equation}\label{est:coer2}
   \left( \mathcal{A}f,f\right)\ge \mu_{2}\|f\|_{H^{1}}^{2}
   -\frac{1}{\mu_{2}}\left[(f,Q)^{2}+(f,\partial_{y_{1}}Q)^{2}+(f,\partial_{y_{1}}Q)^{2}\right].
\end{equation}

Recall also that, we consider the following two smooth functions:
\begin{equation*}
    \widehat{F}(\xi)=\frac{1}{\sqrt{2\pi}}\int_{\R}F(y_{2})e^{-i\xi y_{2}}\dd y_{2}\quad \mbox{with}\quad F(y_{2})=\int_{\R}\Lambda Q(y_{1},y_{2})\dd y_{1}.
\end{equation*}
In addition, the constant $\theta$ is defined by 
\begin{equation*}
    \theta
    =2\bigg(\int_{\R}\frac{|\widehat{F}(\xi)|^{2}}{1+|\xi|^{2}}\dd \xi\bigg)\bigg/
    \bigg(\int_{\R}|\widehat{F}(\xi)|^{2}\dd \xi\bigg).
\end{equation*}
By an elementary numerical computation, we find $\theta\approx 1.66$ (see~\cite[Appendix A]{CLY}).

\smallskip
Throughout this note, for a given small constant $\delta$, we will denote by $\alpha(\delta)$ a generic small constant with $\alpha(\delta)\to 0$ as $\delta\to 0$.
\subsection*{Acknowledgments}
The author G.C. was partially supported by NSF grant DMS-2350301 and by Simons foundation MP-TSM-0000225.
\section{Preliminaries}
\subsection{The localized profile}
In this subsection, we recall the definition of the localized profile $Q_{b}$ and some useful estimates related to $Q_{b}$. We start with the definition of the non-localized profile $P$ which grows as $y_{1}\to -\infty$.

\begin{lemma}[\cite{CLY}]\label{le:nonlocal}
There exists a smooth function $P\in C^{\infty}(\R^{2})$ with   $\partial_{y_{1}}P\in \mathcal{Z}(\R^{2})$
such that the following estimates hold. 
\begin{enumerate}
    \item \emph{First-type estimate.} We have
\begin{equation*}
\partial_{y_{1}}\mathcal{L}P=\Lambda Q,\quad \lim_{y_{1}\to \infty}\partial_{y_{2}}^{n}P(y_{1},y_{2})=0,\ \mbox{for any}\  n\in \mathbb{N},
\end{equation*}
\begin{equation*}
|(P,\nabla Q)|=0\quad \mbox{and}\quad (P,Q)=\frac{1}{4}\int_{\mathbb{R}}|F(y_{2})|^{2}\dd y_2>0.
\end{equation*}
\item \emph{Second-type estimate.} For any $\beta=(\beta_{1},\beta_{2})\in \mathbb{N}^{2}$, there exists $C_{1\beta}>0$ such that 
\begin{equation*}
\begin{aligned}
\left|\partial_{y}^{\beta}P(y_{1},y_{2})\right|&\le C_{1\beta}e^{-\frac{|y_{2}|}{3}},\quad \mbox{on}\ \R^{2},\\
\left|\partial_{y}^{\beta}P(y_{1},y_{2})\right|&\le C_{1\beta}e^{-\frac{|y|}{3}},\quad \ \mbox{on}\ (0,\infty)\times \R.
\end{aligned}
\end{equation*}
In addition, for any $\beta=(\beta_{1},\beta_{2})\in \mathbb{N}^{2}$ with $\beta_{1}\ne 0$, there exists $C_{2\beta}>0$ such that 
\begin{equation*}
\left|\partial_{y}^{\beta}P(y_{1},y_{2})\right|\le C_{2\beta}e^{-\frac{|y|}{3}},\quad \ \mbox{on}\ \mathbb{R}^{2}.
\end{equation*}
\end{enumerate}
\end{lemma}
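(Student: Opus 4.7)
The plan is to construct $P$ as the solution of the elliptic equation $\mathcal{L} P = W$, where $W$ is the specific primitive of $\Lambda Q$ in $y_1$ chosen so that $W(y_1,y_2)\to 0$ exponentially as $y_1\to+\infty$, namely
\begin{equation*}
    W(y_1,y_2) := -\int_{y_1}^{+\infty} \Lambda Q(y_1',y_2)\,\dd y_1'.
\end{equation*}
Then $\partial_{y_1} W = \Lambda Q$, and $W(y_1,y_2)\to -F(y_2)$ as $y_1\to-\infty$. The non-decay of $W$ at $-\infty$ is exactly what forces $P$ to be merely non-localized, with only $\partial_{y_1} P$ belonging to $\mathcal{Z}(\R^2)$.

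To invert $\mathcal{L}$ on $W$, I would first absorb the non-decaying asymptotics by writing $W = W_\infty + W_{\mathrm{loc}}$, with $W_\infty(y_1,y_2) := -F(y_2)\bigl(1-\chi(y_1)\bigr)$ carrying the boundary value at $-\infty$ and $W_{\mathrm{loc}}\in \mathcal{Z}(\R^2)$. Correspondingly set $P = P_\infty + P_{\mathrm{loc}}$. The outer piece $P_\infty$ can be produced explicitly via Fourier transform in $y_2$: for each $\xi$, I would solve the 1D ODE $(-\partial_{y_1}^2 + 1 + \xi^2)\widehat{P_\infty}(y_1,\xi) = -\widehat{F}(\xi)\bigl(1-\chi(y_1)\bigr)$, choosing the bounded solution with the correct asymptotic value $-\widehat{F}(\xi)/(1+\xi^2)$ as $y_1\to-\infty$. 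The residual $\mathcal{L}P_{\mathrm{loc}} = W_{\mathrm{loc}} - \bigl(\mathcal{L} - (-\Delta + 1)\bigr)P_\infty$ then has a right-hand side in $\mathcal{Z}(\R^2)$, and I would solve it by the Fredholm alternative on the orthogonal complement of $\ker\mathcal{L} = \mathrm{span}(\partial_{y_1}Q, \partial_{y_2}Q)$ using the coercivity~\eqref{est:coer}. The required solvability conditions $(W, \partial_{y_j}Q) = 0$ reduce, after an integration by parts $(W,\partial_{y_1}Q) = -(\Lambda Q, Q)$, to the classical scaling identity $(\Lambda Q, Q) = 0$ and to parity in $y_2$ (using that $Q$ is radial, hence $W$ is even in $y_2$ while $\partial_{y_2}Q$ is odd).

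Once $P$ is constructed, the orthogonality $(P,\nabla Q) = 0$ is enforced by subtracting suitable multiples of $\partial_{y_1}Q$ and $\partial_{y_2}Q$, which lie in $\ker\mathcal{L}$ and do not alter $\mathcal{L}P$. The evaluation of $(P,Q)$ then uses the classical identity $\mathcal{L}(\Lambda Q) = -2Q$ together with the self-adjointness of $\mathcal{L}$:
\begin{equation*}
(P,Q) = -\tfrac{1}{2}(P,\mathcal{L}\Lambda Q) = -\tfrac{1}{2}(W,\Lambda Q) = -\tfrac{1}{2}\int_{\R^2} W\,\partial_{y_1}W\,\dd y = -\tfrac{1}{4}\int_{\R}\bigl[W^2\bigr]_{y_1=-\infty}^{y_1=+\infty}\dd y_2 = \tfrac{1}{4}\int_{\R} F(y_2)^2\,\dd y_2.
\end{equation*}

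The decay estimates are then derived from an anisotropic ODE analysis in $y_1$, working mode-by-mode in the Fourier variable $\xi$ dual to $y_2$: since $\partial_{y_1}W = \Lambda Q$ decays exponentially in $|y|$ and the fundamental solutions of the symbol $-\partial_{y_1}^2 + 1 + \xi^2$ behave like $e^{\pm\sqrt{1+\xi^2}\,y_1}$, one obtains the first-type bound by standard resolvent estimates and the sharper second-type bound on the half-plane $\{y_1 > 0\}$ by selecting the decaying branch at $+\infty$. I expect the main obstacle to be the anisotropic character of the estimates---in particular, recovering the improved exponential decay in the full modulus $|y|$ only on $\{y_1 > 0\}$ rather than on all of $\R^2$, which requires tracking the boundary contribution at $y_1 \to -\infty$ carefully and ensuring that $P_\infty$ has the correct profile so the residual $P_{\mathrm{loc}}$ genuinely inherits the full exponential decay from $W_{\mathrm{loc}}$.
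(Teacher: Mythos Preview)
Your proposal is correct and matches the approach the paper indicates: the paper does not give a self-contained proof here but simply states that the construction ``relies on the inversion of $\mathcal{L}$'' and refers to \cite[Proposition~2.1 and Lemma~2.2]{CLY}, which is precisely your strategy of solving $\mathcal{L}P=W$ for the primitive $W(y_1,y_2)=-\int_{y_1}^\infty \Lambda Q\,\dd y_1'$. Your handling of the non-localized tail via the splitting $P=P_\infty+P_{\mathrm{loc}}$, the verification of the Fredholm solvability conditions via $(\Lambda Q,Q)=0$ and parity, and the computation of $(P,Q)$ through $\mathcal{L}\Lambda Q=-2Q$ are all sound and standard for this type of construction.
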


\begin{proof}
The proof of the existence for $P$ relies on the inversion of $\mathcal{L}$. We refer to~\cite[Proposition 2.1 and Lemma 2.2]{CLY} for the details of the proof.
\end{proof}
We now define the localized profile as follows
\begin{equation*}
\chi_{b}(y_{1})=\chi(|b|^{\frac{3}{4}}y_{1})\quad \mbox{and}\quad 
Q_{b}(y)=Q(y)+b\chi_{b}(y_{1})P(y).
\end{equation*}

In addition, we define the following error term related to the localized profile
\begin{equation*}
     \Psi_{b}=\partial_{y_{1}}\left(-\Delta Q_{b}+Q_{b}-Q_{b}^{3}\right)-b\Lambda Q_{b}.
     \end{equation*}

\begin{lemma}[\cite{CLY}]\label{le:local}
The localized profile $Q_{b}$ satisfies the following estimates.

\begin{enumerate}
    \item \emph{Pointwise estimates on $Q_{b}$.} For any $y\in\mathbb{R}^2$, we have
\begin{equation*}
|Q_{b}(y)|\lesssim e^{-\frac{|y|}{3}}
+|b|e^{-\frac{|y_{2}|}{3}}\mathbf{1}_{[-2,0]}(|b|^{\frac{3}{4}}y_1).
\end{equation*}
In addition, for any $y\in\mathbb{R}^2$ and $n\in\mathbb{N}^{+}$, we have 
\begin{equation*}
|\partial_{y_1}^n Q_b(y)|\lesssim e^{-\frac{|y|}{3}}+ |b|^{1+\frac{3n}{4}}e^{-\frac{|y_{2}|}{3}}\mathbf{1}_{[-2,-1]}(|b|^{\frac{3}{4}}y_1).
\end{equation*}

    \item \emph{Pointwise estimates on $\Psi_{b}$.}
    For any $y\in \R^{2}$, we have 
    \begin{equation*}
\begin{aligned}
\left|\Psi_{b}(y)\right|
&\lesssim |b|^{2}\left(e^{-\frac{|y|}{3}}+e^{-\frac{|y_{2}|}{3}}\mathbf{1}_{[-2,0]}(|b|^{\frac{3}{4}}y_1)\right)\\
&+|b|^{\frac{7}{4}}e^{-\frac{|y_{2}|}{3}}\mathbf{1}_{[-2,-1]}(|b|^{\frac{3}{4}}y_1).
\end{aligned}
\end{equation*}
In addition, for any $y\in \mathbb{R}^{2}$ and $n\in \mathbb{N}^{+}$, we have 
\begin{equation*}
|\partial_{y_1}^n \Psi_b(y)|\lesssim |b|^{2}e^{-\frac{|y|}{3}}+|b|^{1+\frac{3}{4}(n+1)}e^{-\frac{|y_{2}|}{3}}\mathbf{1}_{[-2,-1]}(|b|^{\frac{3}{4}}y_1).
\end{equation*}

    \item \emph{Mass and energy of $Q_{b}$.} We have 
    \begin{equation*}
    \begin{aligned}
    \left|E(Q_{b})+b(P,Q)\right|&\lesssim b^{2},\\
    \left|\int_{\R^{2}}Q_{b}^{2}\dd y-\int_{\R^{2}}Q^{2}-2b(P,Q)\right|&\lesssim |b|^{\frac{5}{4}}.
    \end{aligned}
\end{equation*}

\item \emph{$L^{2}$-scalar product with $Q$.} We have 
\begin{equation*}
    \left|(\Psi_b, Q)+\frac{b^2}{2}\int_{\mathbb{R}}\frac{|\widehat{F}(\xi)|^2}{1+|\xi|^2}\,\dd \xi\right|\lesssim |b|^3.
\end{equation*}
\end{enumerate}
    \end{lemma}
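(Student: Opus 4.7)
The plan is to substitute $Q_b = Q + b\chi_b P$ and exploit two structural inputs---the ground state equation $-\Delta Q + Q - Q^3 = 0$ and the identity $\partial_{y_1}\mathcal{L}P = \Lambda Q$ from Lemma~\ref{le:nonlocal}---together with the fine decay estimates on $P$ (the strong $e^{-|y|/3}$ bound on $(0,\infty)\times\R$ and for $y_1$-derivatives, versus the weaker $e^{-|y_2|/3}$ bound elsewhere). All four items follow by direct computation.

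For (i), I split the plane into $y_1 > 0$, where $|P|\lesssim e^{-|y|/3}$ absorbs $b\chi_b P$ into the $e^{-|y|/3}$ term, and $y_1 \in [-2|b|^{-3/4}, 0]$, where only $|P|\lesssim e^{-|y_2|/3}$ is available, producing the indicator term. For the $y_1$-derivative bound, Leibniz decomposes $\partial_{y_1}^n(\chi_b P)$ into the $k = 0$ piece (where $|\partial_{y_1}^n P|\lesssim e^{-|y|/3}$ globally since $\beta_1 = n \neq 0$) and pieces with $k\geq 1$ (where $\partial_{y_1}^k\chi_b$ contributes the factor $|b|^{3k/4}$ on the compact support $|b|^{3/4}y_1 \in [-2,-1]$). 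For (ii), the ground state equation gives
\begin{equation*}
-\Delta Q_b + Q_b - Q_b^3 = b\chi_b\,\mathcal{L}P - b\bigl(2\partial_{y_1}\chi_b\,\partial_{y_1}P + \partial_{y_1}^2\chi_b\cdot P\bigr) - 3b^2 Q\chi_b^2 P^2 - b^3\chi_b^3 P^3.
\end{equation*}
Applying $\partial_{y_1}$ and using $\partial_{y_1}\mathcal{L}P = \Lambda Q$, the principal piece $b\chi_b\Lambda Q$ cancels against the $b\Lambda Q$ part of $b\Lambda Q_b = b\Lambda Q + b^2\Lambda(\chi_b P)$; what remains is the exp-small piece $-b(1-\chi_b)\Lambda Q$, the cutoff-derivative term $b\partial_{y_1}\chi_b\cdot\mathcal{L}P$ (of size $|b|^{7/4}e^{-|y_2|/3}$ on the cutoff, using $|\mathcal{L}P|\lesssim e^{-|y_2|/3}$), the quadratic $-b^2\Lambda(\chi_b P)$, and nonlinear remainders, all within the stated bounds. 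Higher $y_1$-derivatives follow by applying $\partial_{y_1}^n$ term by term.

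For (iii), expanding $\int Q_b^2$ gives $\int Q^2 + 2b(P,Q) + b^2\int\chi_b^2 P^2$ up to exponentially small errors, and $b^2\int\chi_b^2 P^2 = O(|b|^{5/4})$ because $P$ is only bounded (not decaying) across the cutoff $y_1$-strip of width $|b|^{-3/4}$. For the energy, Taylor expansion around $Q$ using $E'(Q) = -\Delta Q - Q^3 = -Q$ gives the linear contribution $-b(\chi_b P, Q) = -b(P,Q) + \text{exp small}$, while the quadratic part $\tfrac{b^2}{2}\int|\nabla(\chi_b P)|^2 - \tfrac{3b^2}{2}\int Q^2\chi_b^2 P^2$ rewrites via integration by parts as $\tfrac{b^2}{2}\int\chi_b^2 P(\mathcal{L}P - P)$ plus cutoff-derivative corrections, from which the claimed bound follows.

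For (iv), substituting the expansion of $\Psi_b$ from (ii) into $(\Psi_b, Q)$, the exponential decay of $Q$ and $\Lambda Q$ sends all cutoff-supported pieces to exp-small size, leaving the quadratic $-b^2(\Lambda(\chi_b P), Q)$ and the nonlinear $-3b^2(\partial_{y_1}(Q\chi_b^2 P^2), Q)$ as dominant. By anti-self-adjointness of $\Lambda$, $-b^2(\Lambda(\chi_b P), Q) = b^2(P, \Lambda Q) + \text{exp small}$. Writing $(P,\Lambda Q) = (P,\partial_{y_1}\mathcal{L}P)$ and integrating by parts in $y_1$ produces the boundary contribution $\int_\R P_\infty F\,dy_2 = -\int|\widehat F|^2/(1+\xi^2)\,d\xi$ at $y_1 = -\infty$ (by Plancherel, with $\widehat{P_\infty}(\xi) = -\widehat F(\xi)/(1+\xi^2)$ obtained from the asymptotic equation $(-\partial_{y_2}^2+1)P_\infty = -F$) plus the interior term $-(\partial_{y_1}P,\mathcal{L}P)$. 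A further integration by parts in the interior term, using $\int\partial_{y_1}P\cdot P\,dy_1 = -\tfrac12 P_\infty^2$ at the left boundary and the analogous computation for the Laplacian, gives $(\partial_{y_1}P,\mathcal{L}P) = -\tfrac12\int|\widehat F|^2/(1+\xi^2)\,d\xi + 3\int P^2 Q\partial_{y_1}Q\,dy$; the surplus $3\int P^2 Q\partial_{y_1}Q$ is cancelled exactly by the nonlinear piece, which after integration by parts against $Q$ yields $+3b^2\int QP^2\partial_{y_1}Q$. The net leading coefficient is $-\tfrac{b^2}{2}\int|\widehat F|^2/(1+\xi^2)\,d\xi$, with $O(b^3)$ remainder. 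The main obstacle is this last step: tracking the boundary term at $y_1 = -\infty$ where $P$ has a nontrivial limit $P_\infty$, performing the Fourier analysis in $y_2$ to recover the weight $(1+\xi^2)^{-1}$, and spotting the exact cancellation between the commutator-induced interior surplus and the nonlinear $b^2$ contribution.
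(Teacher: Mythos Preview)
Your proposal is correct and follows exactly the direct-computation approach that the paper defers to \cite[Lemma 2.3]{CLY}: substitute $Q_b=Q+b\chi_bP$, exploit $-\Delta Q+Q-Q^3=0$ and $\partial_{y_1}\mathcal{L}P=\Lambda Q$, and track which pieces see only the weak $e^{-|y_2|/3}$ decay of $P$ on the cutoff strip. Your treatment of (iv) is particularly clean---the boundary contribution at $y_1\to-\infty$ via $(-\partial_{y_2}^2+1)P_\infty=-F$ and Plancherel, together with the exact cancellation of the surplus $3\int QP^2\partial_{y_1}Q$ against the nonlinear $b^2$ piece, is precisely the mechanism.

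One minor caveat on (iii): your rewriting of the quadratic energy term as $\tfrac{b^2}{2}\int\chi_b^2 P(\mathcal{L}P-P)$ does not in fact yield $O(b^2)$. On the strip $y_1\in[-2|b|^{-3/4},0]$ one has $\mathcal{L}P-P\to -\partial_{y_2}^2P_\infty$ (since $\partial_{y_1}^2P$ and $Q^2P$ are exponentially small there), so $\int_{\R}P(\mathcal{L}P-P)\,dy_2\to\int(P_\infty')^2>0$ and the $y_1$-integral over the strip contributes $\sim|b|^{-3/4}$, giving a quadratic term of size $|b|^{5/4}$ rather than $b^2$. This matches the $|b|^{5/4}$ in the mass expansion (same mechanism: $b^2\int\chi_b^2P^2\sim|b|^{5/4}$) and is all that is ever used downstream (Lemma~\ref{le:blambda} only needs $o(|b|)$), so either the stated exponent is a transcription slip or an extra cancellation specific to \cite{CLY} is being invoked; your argument as written delivers $|b|^{5/4}$.
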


    \begin{proof}
        The proof relies on the definition of localized profile $Q_{b}$ and Lemma~\ref{le:nonlocal}. We refer to~\cite[Lemma 2.3]{CLY} for the details of the proof.
    \end{proof}

\subsection{Geometrical decomposition of the flow}\label{SS:Geome}
In this subsection, we recall the modulation theory for the solution of~\eqref{equ:CP} near the soliton manifold. We start with the following variation property of the ground state $Q$. The proof is similar to~\cite[Proposition 3.7]{Raphael} and we omit it.
\begin{lemma}[Variation property of $Q$]\label{le:varia}
There exists a constant $\delta_{1}>0$ such that the following hold. For any $0<\delta_{2}<\delta_{1}$ and $u\in H^{1}$ with 
\begin{equation*}
E(u)\le \delta_{2}\int_{\R^{2}}|\nabla u|^{2}\dd x\quad 
\mbox{and}\quad 
\int_{\R^{2}}u^{2}\dd x\le \int_{\R^{2}}Q^{2}\dd x+\delta_{2},
\end{equation*}
there exist $(\sigma_{0},\lambda_{0},x_{0})\in \left\{-1,1\right\}\times (0,\infty)\times \R^{2}$ such that 

\begin{equation*}
\left\|Q(x)-\sigma_{0}\lambda_{0}u(\lambda_{0} x+x_{0})\right\|_{H^{1}}\le \alpha(\delta_{2}).
\end{equation*}
\end{lemma}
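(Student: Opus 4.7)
The plan is to argue by contradiction: suppose the conclusion fails for some sequence $u_n \in H^1(\R^2)$ with parameters $\delta_n \downarrow 0$, meaning
\begin{equation*}
    E(u_n) \le \delta_n \|\nabla u_n\|_{L^2}^2, \qquad \|u_n\|_{L^2}^2 \le \|Q\|_{L^2}^2 + \delta_n,
\end{equation*}
yet
\begin{equation*}
    \inf_{(\sigma,\lambda,x_0)} \bigl\|Q(\cdot) - \sigma \lambda u_n(\lambda \cdot + x_0)\bigr\|_{H^1} \ge c_0 > 0.
\end{equation*}
Note that since the three quantities $E(u)$, $\|u\|_{L^2}$ and $\|Q - \sigma \lambda u(\lambda \cdot + x_0)\|_{H^1}$ (after optimization in $(\sigma, \lambda, x_0)$) are invariant under the $L^2$-preserving rescaling $u \mapsto \tilde\lambda u(\tilde\lambda \cdot)$ up to an overall factor on $E$ and $\|\nabla \cdot\|_{L^2}$, I would normalize by setting $v_n(x) := \tilde\lambda_n u_n(\tilde\lambda_n x)$ with $\tilde\lambda_n := \|\nabla u_n\|_{L^2}^{-1}$, so that $\|\nabla v_n\|_{L^2} = 1$ while the hypotheses translate into
\begin{equation*}
    E(v_n) \le \delta_n, \qquad \|v_n\|_{L^2}^2 \le \|Q\|_{L^2}^2 + \delta_n.
\end{equation*}

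Next I would combine these bounds with the sharp Gagliardo-Nirenberg inequality stated in the introduction. From $E(v_n) \le \delta_n$ and $\|\nabla v_n\|_{L^2}^2 = 1$ I get $\tfrac14 \int v_n^4 \ge \tfrac12 - \delta_n$, while GN gives $\int v_n^4 \le 2 \|\nabla v_n\|_{L^2}^2 \|v_n\|_{L^2}^2/\|Q\|_{L^2}^2 \le 2(1 + \delta_n/\|Q\|_{L^2}^2)$. Thus
\begin{equation*}
    \int_{\R^2} v_n^4 \,\dd x \longrightarrow 2, \qquad \|v_n\|_{L^2}^2 \longrightarrow \|Q\|_{L^2}^2, \qquad \|\nabla v_n\|_{L^2}^2 \longrightarrow 1,
\end{equation*}
so $(v_n)$ is a minimizing sequence for the Gagliardo-Nirenberg quotient (equivalently, a vanishing sequence of the Weinstein functional).

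The main step is then to apply the concentration-compactness principle (or a profile decomposition) to the sequence $v_n$, which is bounded in $H^1(\R^2)$. The Lieb compactness argument (as used in the classical variational characterization of $Q$, cf.\ Weinstein and the argument referred to in \cite{Raphael}) rules out vanishing through the lower bound $\int v_n^4 \to 2 > 0$, and rules out dichotomy by the strict concavity of the GN quotient under splitting of mass. Hence there exist translations $x_n \in \R^2$ such that $v_n(\cdot + x_n)$ converges strongly in $L^4(\R^2)$; together with the convergence of $\|\nabla v_n\|_{L^2}$ and $\|v_n\|_{L^2}$ this upgrades to strong convergence in $H^1$ to some limit $v_\infty$. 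The limit saturates the GN inequality and therefore, by the rigidity of the equality case, must be of the form $v_\infty(x) = \sigma_\infty \lambda_\infty Q(\lambda_\infty x + y_\infty)$ for some $(\sigma_\infty, \lambda_\infty, y_\infty)$; since $\|v_\infty\|_{L^2} = \|Q\|_{L^2}$ and $\|\nabla v_\infty\|_{L^2} = 1$ the scale $\lambda_\infty$ is fixed to a universal positive constant. Feeding this back into $u_n$ (absorbing $\lambda_\infty$ and $y_\infty$ into $\tilde\lambda_n$ and $x_n$) yields a choice of $(\sigma_n, \lambda_n, x_n)$ realizing $\|Q - \sigma_n \lambda_n u_n(\lambda_n \cdot + x_n)\|_{H^1} \to 0$, contradicting the standing assumption.

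The main obstacle is the concentration-compactness step: proving that no mass of $v_n$ escapes to infinity and that the sequence does not split into two separated bumps. This is handled by the standard Lions-type dichotomy argument exploiting the strict subadditivity of the best GN constant under mass splitting, and the author's reference to \cite[Proposition 3.7]{Raphael} indicates that this is essentially textbook; the remaining bookkeeping (unwinding the rescaling to produce the triple $(\sigma_0, \lambda_0, x_0)$ and absorbing errors into $\alpha(\delta_2)$) is routine.
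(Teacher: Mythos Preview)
Your argument is correct and is precisely the standard concentration--compactness/variational approach that the paper has in mind: the authors omit the proof and refer to \cite[Proposition 3.7]{Raphael}, whose proof proceeds exactly by normalizing to a minimizing sequence for the sharp Gagliardo--Nirenberg inequality and invoking Lions' concentration--compactness to extract a translated subsequence converging strongly in $H^{1}$ to a rescaled copy of $\pm Q$. There is nothing to add.
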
 

We assume that there exist $(\overline{\lambda}(t),\overline{x}(t))\in (0,\infty)\times \R^{2}$ and $\overline{\varepsilon}(t)\in H^{1}$ such that 
\begin{equation}\label{equ:assumdecom}
    \phi(t,x)=\frac{1}{\overline{\lambda}(t)}\left(Q+\overline{\varepsilon}\right)\left(t,\frac{x-\overline{x}(t)}{\overline{\lambda}(t)}\right),\quad \mbox{with}\ \|\overline{\varepsilon}(t)\|_{L^{2}}\le \kappa\le \kappa^{*}.
\end{equation}
Here, $0<\kappa^{*}\ll 1$ is a small enough universal constant.

\smallskip
First, we recall the following modulation theory for the solution of~\eqref{equ:CP}.
\begin{lemma}[\cite{CLY}]\label{le:decom}
Assuming~\eqref{equ:assumdecom} on $[0,t_{0}]$, then there exist $C^{1}$ functions $(\lambda,x,b): [0,t_{0}]\mapsto(0,\infty)\times \R^{2}\times \R$ such that 
\begin{equation}\label{equ:vare}
    \varepsilon(t,y)=\lambda(t)\phi(t,\lambda(t)y+x(t))-Q_{b(t)}(y),\quad \mbox{for any}\ t\in [0,t_{0}],
\end{equation}
satisfies the orthogonality conditions
\begin{equation}\label{equ:orthe}
\left(\varepsilon(t),Q\right)=\left(\varepsilon(t),Q^{3}\right)=\left|\left(\varepsilon(t),\nabla Q\right)\right|=0.
\end{equation}
Moreover, we have the following smallness estimates
    \begin{equation*}
        |b(t)|+\|\varepsilon(t)\|_{L^{2}}+\left|1-\frac{\lambda(t)}{\overline{\lambda}(t)}\right|\lesssim \alpha(\kappa)\quad 
        \mbox{and}\quad 
        \|\varepsilon(t)\|_{H^{1}}\lesssim \alpha\left(\|\overline{\varepsilon}(t)\|_{H^{1}}\right).
    \end{equation*}
\end{lemma}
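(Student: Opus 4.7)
The proof is a standard modulation-theoretic application of the implicit function theorem. The plan is to encode the four orthogonality conditions in~\eqref{equ:orthe} as a system of four equations in the four modulation unknowns $(\lambda, x_{1}, x_{2}, b)$ and to solve it locally around the soliton manifold. Concretely, for $u \in H^{1}(\R^{2})$ close to $\bar\lambda^{-1} Q((\cdot - \bar x)/\bar\lambda)$ and parameters close to $(\bar\lambda, \bar x, 0)$, set
\begin{equation*}
\eta(y) = \lambda u(\lambda y + x) - Q_{b}(y), \qquad
\Phi(u;\lambda, x, b) = \bigl((\eta, Q),\, (\eta, Q^{3}),\, (\eta, \partial_{y_{1}} Q),\, (\eta, \partial_{y_{2}} Q)\bigr).
\end{equation*}
At the reference point $(\lambda, x, b) = (\bar\lambda, \bar x, 0)$ one has $\eta \equiv 0$ and hence $\Phi = 0$; the aim is to invoke the IFT to solve $\Phi = 0$ for $(\lambda, x, b)$ as a $C^{1}$ function of $u$ in a neighborhood.

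The key step is the invertibility of $D_{(\lambda, x, b)} \Phi$ at the reference point. Direct differentiation gives $\partial_{\lambda} \eta|_{0} = \bar\lambda^{-1} \Lambda Q$, $\partial_{x_{i}} \eta|_{0} = \bar\lambda^{-1} \partial_{y_{i}} Q$, and $\partial_{b} \eta|_{0} = -P$ (because $\chi_{b}(y_{1}) \to 1$ as $b \to 0$). Using the radial parity of $Q$ (which kills all mixed pairings $(\Lambda Q, \partial_{y_{i}} Q)$, $(\partial_{y_{i}} Q, Q)$, $(\partial_{y_{i}} Q, Q^{3})$ and $(\partial_{y_{1}} Q, \partial_{y_{2}} Q)$), the mass-critical identity $(\Lambda Q, Q) = 0$, the positive pairing $(\Lambda Q, Q^{3}) = \tfrac{1}{2}\|Q\|_{L^{4}}^{4}$, and the non-degeneracies $(P, Q) > 0$ and $(P, \nabla Q) = 0$ recorded in Lemma~\ref{le:nonlocal}, one checks that, with rows ordered as $(Q, Q^{3}, \partial_{y_{1}} Q, \partial_{y_{2}} Q)$ and columns ordered as $(b, \lambda, x_{1}, x_{2})$, the Jacobian is lower triangular with diagonal
\begin{equation*}
\bigl(-(P,Q),\ \bar\lambda^{-1}(\Lambda Q, Q^{3}),\ \bar\lambda^{-1} \|\partial_{y_{1}} Q\|_{L^{2}}^{2},\ \bar\lambda^{-1} \|\partial_{y_{2}} Q\|_{L^{2}}^{2}\bigr),
\end{equation*}
all nonzero.

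The IFT then yields locally unique $C^{1}$ maps $u \mapsto (\lambda(u), x(u), b(u))$ on a neighborhood of the soliton manifold in $H^{1}(\R^{2})$ enforcing the orthogonality conditions; specializing to $u = \phi(t)$, which lies in this neighborhood for $\kappa^{*}$ small enough by~\eqref{equ:assumdecom}, produces $(\lambda(t), x(t), b(t))$. Continuity of $t \mapsto \phi(t)$ in $H^{1}$ transfers to continuity of the parameters, and the $C^{1}$-in-$t$ regularity follows by differentiating the identity $\Phi(\phi(t); \lambda(t), x(t), b(t)) = 0$ in $t$: this is legitimate because $\partial_{t} \phi \in C([0,t_{0}); H^{-N})$ for some $N$ by~\eqref{equ:CP} and the test functions $Q, Q^{3}, \nabla Q$ are Schwartz, so each coordinate of $\Phi$ is a $C^{1}$ function of $t$; the invertibility of the Jacobian then solves uniquely for $(\dot\lambda, \dot x, \dot b)$. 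The smallness bounds $|b(t)| + \|\varepsilon(t)\|_{L^{2}} + |1 - \lambda(t)/\bar\lambda(t)| \lesssim \alpha(\kappa)$ come from the quantitative form of the IFT together with $\|\bar\varepsilon(t)\|_{L^{2}} \le \kappa$, and the $H^{1}$-bound on $\varepsilon$ follows by inserting the controlled parameters into the definition~\eqref{equ:vare} and using $\|\bar\varepsilon(t)\|_{H^{1}}$. The only substantive obstacle is the non-degeneracy of the Jacobian, which hinges on $(P, Q) > 0$: it is precisely this feature that allows the parameter $b$ to absorb the $Q$-direction despite the mass-critical identity $(\Lambda Q, Q) = 0$, a structural point characteristic of minimal-mass problems where mass conservation ties $b$ to the projection of $\varepsilon$ on $Q$.
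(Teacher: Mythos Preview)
Your proposal is correct and follows exactly the approach the paper indicates: the paper's own proof consists of the single sentence ``The proof of the decomposition proposition relies on the implicit function Theorem. We refer to~\cite[Proposition 3.1]{CLY} for the details of the proof.'' Your sketch supplies precisely those details---the map $\Phi$, the lower-triangular Jacobian with nonzero diagonal driven by $(P,Q)>0$, $(\Lambda Q,Q^{3})\neq 0$, and $\|\partial_{y_i}Q\|_{L^{2}}^{2}\neq 0$---and correctly isolates the structural point that $(P,Q)>0$ is what lets $b$ compensate for the mass-critical degeneracy $(\Lambda Q,Q)=0$.
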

\begin{proof}
    The proof of the decomposition proposition relies on the implicit function Theorem. We refer to~\cite[Propostion 3.1]{CLY} for the details of the proof.
\end{proof}

Second, we deduce the rough estimates related to the geometric parameters $(\lambda,b)$ for the minimal mass solution from the conservations of mass and energy.
\begin{lemma}\label{le:blambda}
Under the assumption of Lemma~\ref{le:decom}, and assume moreover $\|\phi_{0}\|_{L^{2}}=\|Q\|_{L^{2}}$. Then the following estimates hold.
\begin{enumerate}
    \item \emph{Estimate induced by the mass}. For any $t\in [0,t_{0}]$, we have 
    \begin{equation}\label{est:massrough}
        \|\varepsilon(t)\|^{2}_{L^{2}}\lesssim -b(t)\lesssim \|\varepsilon(t)\|^{2}_{L^{2}}.
    \end{equation}

    \item \emph{Estimate induced by the energy.} For any $t\in [0,t_{0}]$, we have 
    \begin{equation}\label{est:energyrough}
       0\le  E(\phi_{0})\lesssim \frac{|b(t)|}{\lambda^{2}(t)}+\frac{\|\varepsilon(t)\|^{2}_{H^{1}}}{\lambda^{2}(t)}\lesssim E(\phi_{0}).
        \end{equation}
\end{enumerate}
    \end{lemma}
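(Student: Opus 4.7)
The plan is to extract both estimates from conservation of mass and energy, expanded around the modulated profile $Q_b$, using the orthogonality conditions in \eqref{equ:orthe} and the coercivity \eqref{est:coer} of the linearized operator $\mathcal{L}$.

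For part (i), I would start from $\|\phi_0\|_{L^2}^2=\|Q\|_{L^2}^2$ and the fact that the $L^2$-norm is invariant under the scaling $\phi\mapsto \lambda\phi(\lambda\cdot+x)$ used in \eqref{equ:vare}. Expanding
\begin{equation*}
\|Q\|_{L^2}^2=\|Q_b+\varepsilon\|_{L^2}^2=\|Q_b\|_{L^2}^2+2(Q_b,\varepsilon)+\|\varepsilon\|_{L^2}^2,
\end{equation*}
the mass expansion of Lemma~\ref{le:local}(iii) turns the first term into $\|Q\|_{L^2}^2+2b(P,Q)+O(|b|^{5/4})$. Since $Q_b=Q+b\chi_b P$ and $(\varepsilon,Q)=0$ by \eqref{equ:orthe}, the cross term $(Q_b,\varepsilon)$ collapses to $b(\chi_b P,\varepsilon)=O(|b|\|\varepsilon\|_{L^2})$. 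This yields the master identity
\begin{equation*}
-2b(P,Q)=\|\varepsilon\|_{L^2}^2+O(|b|^{5/4})+O(|b|\,\|\varepsilon\|_{L^2}).
\end{equation*}
Since $(P,Q)>0$ by Lemma~\ref{le:nonlocal}, a two-sided Young argument (absorbing $|b|\|\varepsilon\|_{L^2}$ into $\frac12\|\varepsilon\|_{L^2}^2+C b^2$ for one direction and into $\frac12|b|+C|b|\|\varepsilon\|_{L^2}^2$ for the other), together with the smallness from Lemma~\ref{le:decom}, gives both inequalities in \eqref{est:massrough}.

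For part (ii), the sign $E(\phi_0)\ge 0$ is immediate from the sharp Gagliardo--Nirenberg inequality combined with $\|\phi_0\|_{L^2}=\|Q\|_{L^2}$. For the two-sided bound I would use the scaling relation $\lambda^2(t)E(\phi_0)=E(Q_b+\varepsilon)$ (coming from conservation of energy and the scaling of $E$), and expand
\begin{equation*}
E(Q_b+\varepsilon)=E(Q_b)+(-\Delta Q_b-Q_b^3,\varepsilon)+\tfrac12(\mathcal{L}\varepsilon,\varepsilon)-\tfrac12\|\varepsilon\|_{L^2}^2+R(\varepsilon),
\end{equation*}
where $R(\varepsilon)$ gathers the cubic and quartic pieces of the $L^4$ term. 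The scalar $E(Q_b)=-b(P,Q)+O(b^2)$ comes from Lemma~\ref{le:local}(iii). The linear term reduces to $-(Q,\varepsilon)+O(|b|)\|\varepsilon\|_{L^2}=O(|b|)\|\varepsilon\|_{L^2}$ after using $(\varepsilon,Q)=0$ and $-\Delta Q+Q-Q^3=0$. The quadratic piece is controlled from below by the coercivity \eqref{est:coer}, since all three projection terms in the RHS of \eqref{est:coer} vanish thanks to \eqref{equ:orthe}, giving $(\mathcal{L}\varepsilon,\varepsilon)\ge\mu_1\|\varepsilon\|_{H^1}^2$. Finally, $R(\varepsilon)$ is bounded by Sobolev embedding and the $L^4$ Gagliardo--Nirenberg estimate as $O(\|\varepsilon\|_{L^2}\|\varepsilon\|_{H^1}^2)+O(\|\varepsilon\|_{L^2}^2\|\varepsilon\|_{H^1}^2)$, hence negligible compared to $\|\varepsilon\|_{H^1}^2$.

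Putting these pieces together, the upper bound $\lambda^2 E(\phi_0)\lesssim |b|+\|\varepsilon\|_{H^1}^2$ is direct. The lower bound $\lambda^2 E(\phi_0)\gtrsim |b|+\|\varepsilon\|_{H^1}^2$ follows by absorbing all error terms into the two leading positive contributions $|b|(P,Q)$ and $\tfrac12\mu_1\|\varepsilon\|_{H^1}^2$. The main obstacle here is the cross term $O(|b|)\|\varepsilon\|_{L^2}$ from the linear part of the energy expansion, which a naive Young inequality would not dominate by $|b|$ alone; this is resolved by invoking part (i), which gives $\|\varepsilon\|_{L^2}\sim\sqrt{|b|}$ and therefore $|b|\|\varepsilon\|_{L^2}=O(|b|^{3/2})\ll|b|$. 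Once this is under control, the remaining error terms are higher order in $|b|$ or absorbed into $\tfrac12\mu_1\|\varepsilon\|_{H^1}^2$, closing the estimate \eqref{est:energyrough}.
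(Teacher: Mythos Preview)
Your approach is the same as the paper's, and the argument is essentially correct, but there are two points where your execution is loose.

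First, a quantitative slip: $\chi_b P$ is \emph{not} uniformly in $L^2$. From Lemma~\ref{le:nonlocal}, $P$ has no decay in $y_1$ on the left, so $\|\chi_b P\|_{L^2}^2\lesssim |b|^{-3/4}$ (this is exactly the bound the paper records). Consequently your cross terms $b(\chi_b P,\varepsilon)$ and the linear energy term $(-\Delta(Q_b-Q)-(Q_b^3-Q^3),\varepsilon)$ are only $O(|b|^{5/8}\|\varepsilon\|_{L^2})$, not $O(|b|\|\varepsilon\|_{L^2})$. This is harmless---the Young splitting still absorbs it---but you should state the correct power.

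Second, and more substantively, your lower bound for (ii) misidentifies the leading terms. You list $|b|(P,Q)$ as one of the ``two leading positive contributions,'' but in your own expansion the explicit term $-\tfrac12\|\varepsilon\|_{L^2}^2$ sits right next to it, and by part (i) this is \emph{exactly} $-|b|(P,Q)+o(|b|)$. So $|b|(P,Q)$ is not available as a positive term at all: it cancels. The paper makes this cancellation explicit by grouping $2(E(Q_b)+b(P,Q))=O(b^2)$ together with $-(\|\varepsilon\|_{L^2}^2+2b(P,Q))=o(|b|+\|\varepsilon\|_{L^2}^2)$, arriving at $2\lambda^2 E(\phi_0)=(\mathcal{L}\varepsilon,\varepsilon)+o(|b|+\|\varepsilon\|_{H^1}^2)$. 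After this, the \emph{only} leading positive term is $(\mathcal{L}\varepsilon,\varepsilon)\ge\mu_1\|\varepsilon\|_{H^1}^2$, and the bound $\lambda^2E(\phi_0)\gtrsim|b|$ then comes from $|b|\lesssim\|\varepsilon\|_{L^2}^2\le\|\varepsilon\|_{H^1}^2$ via part (i). Your proof has all the ingredients for this, but as written it would fail if $\mu_1<1$ unless you invoke the precise identity $\|\varepsilon\|_{L^2}^2+2b(P,Q)=o(|b|+\|\varepsilon\|_{L^2}^2)$ rather than just the equivalence $\|\varepsilon\|_{L^2}^2\sim|b|$.
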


    \begin{proof}
        Proof of (i). We claim that
        \begin{equation}\label{equ:e2b}
\int_{\R^{2}}\varepsilon^{2}\dd y+2b(P,Q)=\alpha(\kappa)\left(\|\varepsilon\|_{L^{2}}^{2}+|b|\right).
        \end{equation}
        
        Indeed, from \eqref{equ:vare}, \eqref{equ:orthe} and the conservation of mass, we deduce that 
        \begin{equation*}
        \begin{aligned}
            \int_{\R^{2}}\phi_{0}^{2}\dd y
            &=\int_{\R^{2}}Q^{2}\dd y+\int_{\R^{2}}\varepsilon^{2}\dd y+2b(P,Q)\\
            &+\int_{\R^{2}}Q_{b}^{2}\dd y-\int_{\R^{2}}Q^{2}\dd y-2b(P,Q)+2b(\varepsilon,\chi_{b}P).
            \end{aligned}
        \end{equation*}
        Using the definition of $P$ in Lemma~\ref{le:nonlocal} and the definition of $\chi_{b}$, 
        \begin{equation*}
            \|\chi_{b}P\|_{L^{2}}^{2}
            \lesssim\int_{\R}\int_{-2|b|^{-\frac{3}{4}}}^{0}
            e^{-\frac{2|y_{2}|}{3}}\dd y_{1}\dd y_{2}+\int_{\R}\int_{0}^{\infty}e^{-\frac{2|y|}{3}}\dd y_{1}\dd y_{2}\lesssim |b|^{-\frac{3}{4}}.
        \end{equation*}
        It follows from Lemma~\ref{le:local}, Lemma~\ref{le:decom} and the Cauchy-Schwarz inequality that 
        \begin{equation*}
           \left| \int_{\R^{2}}Q_{b}^{2}\dd y-\int_{\R^{2}}Q^{2}\dd y-2b(P,Q)+2b(\varepsilon,\chi_{b}P)\right|\lesssim \alpha(\kappa)\left(|b|+\|\varepsilon\|^{2}_{L^{2}}\right).
            \end{equation*}
            Combining the above identity and estimates with $\|\phi_{0}\|_{L^{2}}=\|Q\|_{L^{2}}$, we obtain~\eqref{equ:e2b} and thus the proof of~\eqref{est:massrough} is directly complete.
        
        \smallskip
        Proof of (ii). From \eqref{equ:vare}, \eqref{equ:orthe} and the conservation of energy, we deduce that     
        \begin{equation*}
        \begin{aligned}
    2\lambda^{2}E(\phi_{0})
    &=\left(\mathcal{L}\varepsilon,\varepsilon\right)
    +2\left(E(Q_{b})+b(P,Q)\right)-\left(\int_{\R^{2}}\varepsilon^{2}\dd y+2b(P,Q)\right)\\
    &-\frac{1}{2}\int_{\R^{2}}\left(\varepsilon^{4}+4Q_{b}\varepsilon^{3}+6Q_{b}^{2}\varepsilon^{2}-6Q^{2}\varepsilon^{2}\right)\dd y\\
    &+2\int_{\R^{2}}\varepsilon\left(-\Delta(Q_{b}-Q)-(Q_{b}^{3}-Q^{3})\right)\dd y.
    \end{aligned}
        \end{equation*}
        First, from Lemma~\ref{le:nonlocal} and~\eqref{equ:e2b}, we have 
        \begin{equation*}
\left|E(Q_{b})+b(P,Q)\right|+\left|\int_{\R^{2}}\varepsilon^{2}\dd y+2b(P,Q)\right|\lesssim \alpha(\kappa)\left(\|\varepsilon\|_{L^{2}}^{2}+|b|\right).
        \end{equation*}
        Second, using the Gagliardo-Nirenberg and Cauchy-Schwarz inequalities, 
        \begin{equation*}
        \begin{aligned}
&\left|\int_{\R^{2}}\left(\varepsilon^{4}+4Q_{b}\varepsilon^{3}+6Q_{b}^{2}\varepsilon^{2}-6Q^{2}\varepsilon^{2}\right)\dd y\right|\\
&\lesssim 
\|\varepsilon\|_{L^{4}}^{4}+
\|\varepsilon\|_{L^{4}}^{3}\|Q_{b}\|_{L^{4}}
+|b|\|\varepsilon\|_{L^{4}}^{2}
+|b|\|\varepsilon\|^{2}_{L^{2}}
\lesssim \alpha(\kappa)\left(\|\varepsilon\|_{H^{1}}^{2}+|b|\right).
\end{aligned}
\end{equation*}
Next, using the definition of $Q_{b}$ and $P$, we compute
\begin{equation*}
\begin{aligned}
    -\Delta\left(Q_{b}-Q\right)&=-b\chi_{b}\Delta P-2b\chi'_{b}\partial_{y_{1}}P-b\chi''_{b}P,\\
    -(Q_{b}^{3}-Q^{3})&=-3b\chi_{b}PQ^{2}-3b^{2}\chi_{b}^{2}P^{2}Q-b^{3}\chi_{b}^{3}P^{3}.
    \end{aligned}
\end{equation*}
Using again the definition of $P$ in Lemma~\ref{le:local} and the definition of $\chi_{b}$,
\begin{equation*}
\begin{aligned}
    \left\|\chi_{b}\Delta P\right\|_{L^{2}}^{2}+\|\chi'_{b}\partial_{y_{1}}P\|_{L^{2}}^{2}+\|\chi''_{b}P\|_{L^{2}}^{2}+\|\chi_{b}P\|_{L^{6}}^{6}&\lesssim |b|^{-\frac{3}{4}}.
    \end{aligned}
\end{equation*}
It follows from the Cauchy-Schwarz inequality that 
\begin{equation*}
    \left|\int_{\R^{2}}\varepsilon\left(-\Delta(Q_{b}-Q)-(Q_{b}^{3}-Q^{3})\right)\dd y\right|\lesssim \alpha(\kappa)\left(\|\varepsilon\|_{L^{2}}^{2}+|b|\right).
    \end{equation*}
    Based on the above identity an estimates, we obtain
    \begin{equation*}
2\lambda^{2}E(\phi_{0})=\left(\mathcal{L}\varepsilon,\varepsilon\right)+\alpha\left(\|\varepsilon\|_{H^{1}}^{2}+|b|\right).
    \end{equation*} 
    Combining the above identity with~\eqref{equ:orthe},~\eqref{est:massrough} and the coercivity of the Schr\"odinger operator $\mathcal{L}$ in~\eqref{est:coer}, we complete the proof of~\eqref{est:energyrough}.
        \end{proof}

    In the framework of Lemma~\ref{le:decom}, we introduce the new time variable:
    \begin{equation*}
        s=\int_{0}^{t}\frac{\dd \sigma}{\lambda^{3}(\sigma)}
        \Longleftrightarrow 
        \frac{\dd s}{\dd t}=\frac{1}{\lambda^{3}(t)}.
    \end{equation*}
    In what follows, all functions depending on $(t,x)\in [0,t_{0}]\times \R^{2}$, can be seen as depending on $(s,y)\in [0,s_{0}]\times \R^{2}$ with $s_{0}=s(t_{0})$. In addition, we denote 
    \begin{equation*}
        {\rm{Mod}}=\left(\frac{\lambda_{s}}{\lambda}+b\right)\left(\Lambda Q_{b}+\Lambda \varepsilon\right)
        +\left(\frac{x_{s}}{\lambda}-{\textbf{e}}_{1}\right)\cdot \left(\nabla Q_{b}+\nabla \varepsilon\right)
        -b_{s}\frac{\partial Q_{b}}{\partial b}.
    \end{equation*}
    
    We now recall the equation satisfied by $(\lambda,x,b,\varepsilon)$ under the variables $(s,y)$.
    \begin{lemma}[Standard modulation equations]\label{le:control}
    Under the assumption of Lemma~\ref{le:decom}, and assume moreover
$ \|\nabla \varepsilon(t)\|_{L^{2}}<\kappa<\kappa^{*}$ on $[0,t_{0}]$, for a small enough universal constant $0<\kappa^{*}\ll 1$.  Then the map $(\lambda,x,b):[0,s_{0}]\mapsto (0,\infty)\times\R^{2}\times \R$ is $C^{1}$ and the following hold.
\begin{enumerate}
    \item \emph{(Equation of $\varepsilon$)}. For all $s\in [0,s_{0}]$, we have 
     \begin{equation*}
        \partial_{s}\varepsilon=\partial_{y_{1}}\mathcal{L}\varepsilon
        +\Psi_{b}
        +{\rm{Mod}}
        -b\Lambda \varepsilon
        -\partial_{y_{1}}R_{b}-\partial_{y_{1}}R_{NL},
    \end{equation*}
    where $R_{b}$ and $R_{NL}$ are defined by 
    \begin{equation*}
        R_{b}=3(Q_{b}^{2}-Q^{2})\varepsilon\quad \mbox{and}\quad 
        R_{NL}=3Q_{b}\varepsilon^{2}+\varepsilon^{3}.
    \end{equation*} 
    \item \emph{(Control of parameters)} For all $s\in [0,s_{0}]$, we have 
    \begin{equation*}
    \begin{aligned}
        |b_{s}|&\lesssim b^{2}+\int_{\R^{2}}\varepsilon^{2}e^{-\frac{|y|}{10}}\dd y,\\
        \left|\frac{\lambda_{s}}{\lambda}+b\right|+\left|\frac{x_{s}}{\lambda}-{\textbf{e}}_{1}\right|&\lesssim
        b^{2}+\left(\int_{\R^{2}}\varepsilon^{2}e^{-\frac{|y|}{10}}\dd y\right)^{\frac{1}{2}}.
        \end{aligned}
    \end{equation*}
\end{enumerate}
        \end{lemma}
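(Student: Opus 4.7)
My plan is to apply the standard modulation-equation machinery. For \emph{(i)}, I would start from the decomposition~\eqref{equ:vare}, differentiate in $t$, insert the equation~\eqref{equ:CP} for $\phi$, and rescale time via $\dd s/\dd t = \lambda^{-3}$. The chain rule produces the modulation terms $\tfrac{\lambda_s}{\lambda}\Lambda(Q_b+\varepsilon)$, $\tfrac{x_s}{\lambda}\cdot\nabla(Q_b+\varepsilon)$ and $-b_s\partial_b Q_b$, while the rescaled PDE contributes $-\partial_{y_1}(\Delta(Q_b+\varepsilon)+(Q_b+\varepsilon)^3)$. Splitting off $\partial_{y_1}\mathcal{L}\varepsilon = \partial_{y_1}(-\Delta\varepsilon+\varepsilon-3Q^2\varepsilon)$ captures the linear part, the discrepancy $3(Q_b^2-Q^2)\varepsilon$ is exactly $R_b$, and the purely nonlinear pieces form $R_{NL}$. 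The remaining pure-$Q_b$ expression is rewritten using the very definition $\Psi_b = \partial_{y_1}(-\Delta Q_b + Q_b - Q_b^3) - b\Lambda Q_b$; regrouping $\tfrac{\lambda_s}{\lambda}\Lambda = (\tfrac{\lambda_s}{\lambda}+b)\Lambda - b\Lambda$ and $\tfrac{x_s}{\lambda}\cdot\nabla = (\tfrac{x_s}{\lambda}-{\textbf{e}}_1)\cdot\nabla + \partial_{y_1}$ then cancels the stray $b\Lambda Q_b$ and $-\partial_{y_1}Q_b$ terms against the $(\lambda_s/\lambda+b)$ and $(x_s/\lambda-{\textbf{e}}_1)$ groupings, yielding exactly the stated equation.

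For \emph{(ii)}, I would differentiate each of the four scalar orthogonalities in~\eqref{equ:orthe} in $s$ and substitute the equation for $\partial_s\varepsilon$ just derived. Testing against $Z \in \{Q, Q^3, \partial_{y_1}Q, \partial_{y_2}Q\}$ produces a $4\times 4$ linear system in $(\tfrac{\lambda_s}{\lambda}+b,\, \tfrac{x_{1,s}}{\lambda}-1,\, \tfrac{x_{2,s}}{\lambda},\, b_s)$. Writing $I := \int_{\R^{2}}\varepsilon^{2} e^{-|y|/10}\dd y$ for brevity, the linear contribution is $(\partial_{y_1}\mathcal{L}\varepsilon,Z) = -(\varepsilon,\mathcal{L}\partial_{y_1}Z)$, which vanishes when $Z=Q$ (via $\mathcal{L}\partial_{y_1}Q=0$) and is otherwise $O(I^{1/2})$ by the exponential localization of $\mathcal{L}\partial_{y_1}Z$. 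The products $(\Psi_b,Z)$ are $O(b^2)$ by Lemma~\ref{le:local}\emph{(ii)}, with the sharper Lemma~\ref{le:local}\emph{(iv)} used for $Z=Q$ to secure a $b^2$ (rather than $|b|$) bound on $|b_s|$. The nonlinear pieces $(\partial_{y_1}R_b,Z)$ and $(\partial_{y_1}R_{NL},Z)$, integrated by parts onto $\partial_{y_1}Z$, are bounded by $O(|b|\,I^{1/2})$ and $O(\|\varepsilon\|_{H^1}\,I^{1/2})$ respectively; Young's inequality combined with the $\kappa^*$-smallness of $\|\varepsilon\|_{H^1}$ absorbs them into $b^2 + I$. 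The $b(\Lambda\varepsilon,Z)$ contribution is controlled analogously.

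The principal matrix of this linear system has a non-zero determinant at leading order: the diagonal entries $(\partial_b Q_b,Q) \approx (P,Q) \neq 0$ (Lemma~\ref{le:nonlocal}), $(\Lambda Q, Q^3) = \tfrac{1}{2}\|Q\|_{L^{4}}^{4} \neq 0$, and $\|\partial_{y_i}Q\|_{L^2}^2 \neq 0$ are paired with the $L^2$-critical identity $(\Lambda Q, Q) = 0$ and the symmetries $(\partial_{y_i}Q,Q) = (\partial_{y_i}Q, Q^3) = 0$, which decouple the rows at leading order. Off-diagonal corrections are $O(|b|+\|\varepsilon\|_{L^2}) = O(\kappa^*)$, so the matrix is uniformly invertible in the small-data regime. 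Inverting and substituting the RHS estimates produces the claimed bounds on $|b_s|$ (of order $b^2 + I$) and on $|\tfrac{\lambda_s}{\lambda}+b|+|\tfrac{x_s}{\lambda}-{\textbf{e}}_1|$ (of order $b^2 + I^{1/2}$). The main obstacle I anticipate is the careful bookkeeping of this inversion and the clean absorption of every cross-term into the two canonical forms $b^2 + I$ and $b^2 + I^{1/2}$; the quadratic character of $|b_s|\lesssim b^2 + I$ hinges entirely on the cancellation encoded in Lemma~\ref{le:local}\emph{(iv)}.
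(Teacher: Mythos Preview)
Your proposal is correct and follows precisely the standard modulation-theory approach the paper invokes (by reference to~\cite[Lemmas~3.2 and~3.6]{CLY}): derive the $\varepsilon$-equation by differentiating the decomposition and regrouping, then test against the four orthogonality directions $\{Q,Q^3,\partial_{y_1}Q,\partial_{y_2}Q\}$ and invert the resulting near-diagonal $4\times 4$ system. One small note: the $O(b^2)$ bound on $(\Psi_b,Z)$ for exponentially localized $Z$ already follows from the pointwise estimate in Lemma~\ref{le:local}\emph{(ii)} (the indicator-supported pieces are killed by the decay of $Z$), so the sharper identity~\emph{(iv)} is not actually needed here---it enters only in the refined control of Lemma~\ref{le:refin}.
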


        \begin{proof}
            The proofs of the equation of $\varepsilon$ and the law of $(\lambda,x,b)$ rely on some elementary computations and the orthogonality condition~\eqref{equ:orthe}. We refer to~\cite[Lemma 3.2 and Lemma 3.6]{CLY} for the details of the proofs.
        \end{proof}

        \begin{remark}
           More precisely, in the previous work~\cite[Lemma 3.6]{CLY}, we assume the smallness of a weighted $H^{1}$ norm for the remainder term instead of the smallness of the $\dot{H}^{1}$ norm. However, such a stronger smallness assumption is only needed in the derivation of the refined control of $(\lambda,x,b)$ (see Lemma~\ref{le:refin} below). This is the main reason why we could relax the assumption in the above statement of Lemma~\ref{le:control}.
        \end{remark}

        To state the refined control of $(\lambda,x,b)$, we consider the weighted $H^{1}$ norm of $\varepsilon$ with specific weight functions. For $i=0,1,2$, we consider the function $\vartheta_{i}\in C^{\infty}(\R)$ with
\begin{equation*}
\begin{aligned}
\vartheta_{i}(y_1)=
\begin{cases}
\frac{1}{2},&\mbox{for}\ y_1\in (-\infty,\frac{1}{2}),\\
y_1^{i+6},&\mbox{for}\ y_1\in(1,+\infty),
\end{cases}
\quad \vartheta_{i}'(y_1)\ge0,\ \ \mbox{for any}\ y_{1}\in\mathbb{R}.
\end{aligned}
\end{equation*}
In addition, we consider the even function $\zeta\in C^{\infty}(\R)$ with $\zeta\in (0,1]$ as follows,
\begin{equation*}
\begin{aligned}
\zeta(y_1)=
\begin{cases}
e^{2y_{1}},&\mbox{for}\ y_1\in(-\infty,-\frac{1}{6}),\\
1,&\mbox{for}\ y_1\in(-\frac{1}{10},\frac{1}{10}),\\
e^{-2y_{1}},&\mbox{for}\ y_1\in(\frac{1}{6},\infty),\\
\end{cases}
\quad
\mbox{and}\ \
\int_{\R}\zeta (y_{1})\dd y_{1}=1.
\end{aligned}
\end{equation*}
Let $B>100$ be a large enough universal constant to be chosen later. For $i=0,1,2$, we consider the following weight function related to $\vartheta_{i}$,
\begin{equation*}
\vartheta_{i,B}(y_1)=\vartheta_{i}\left(\frac{y_1}{B^{10}}\right),\quad \mbox{for any}\ y_{1}\in\R.
\end{equation*} 
We also set a weight function $\psi_B\in C^\infty(\mathbb{R})$ such that 
\begin{equation*}
\lim_{y_1\rightarrow-\infty}\psi_B(y_1)=0 \ \ \mbox{and}\ \ 
\psi_{B}'(y_1)=
\begin{cases}
\frac{1}{B}\zeta\left(\frac{y_{1}}{B}+\frac{1}{3}-\frac{1}{2}B^{-\frac{1}{3}}\right),&\text{ for }y_1<-\frac{1}{3}B,\\
\frac{1}{B}\zeta\left(\frac{y_1}{B^{\frac{2}{3}}}+\frac{1}{3}B^{\frac{1}{3}}\right),&\text{ for }y_1\geq-\frac{1}{3}B.
\end{cases}
\end{equation*}
Next, for $i=0,1,2$, we denote 
\begin{equation*}
\varphi_{i,B}(y_{1})=\sqrt{2\psi_{B}(y_{1})\vartheta^{2}_{i,B}(y_{1})},\quad \mbox{for any}\ y_{1}\in\R.
\end{equation*}

For $i=0,1,2$, we consider the following weighted $H^{1}$ norm of remainder term:
\begin{equation*}
    \mathcal{N}_{i}(s)=\int_{\R^{2}}\left[|\nabla \varepsilon(s,y)|^{2}\psi_{B}(y_{1})+|\varepsilon(s,y)|^{2}\varphi_{i,B}(y_{1})\right]\dd y_{1}\dd {y_{2}}.
\end{equation*}

Recall that, $\phi(t)$ is a solution of~\eqref{equ:CP} which satisfies~\eqref{equ:assumdecom} on $[0,t_{0}]$ and thus on $[0,t_{0}]$ admits a decomposition~\eqref{equ:vare} as in Lemma~\ref{le:decom}. Let $0<\kappa\ll 1$ be a small enough universal constant. 
Denote $s_{0}=s(t_{0})$ and assume the following a priori bounds for any $s\in [0,s_{0}]$:
\begin{enumerate}
    \item [(${\rm{H1}}$)] Smallness. We assume 
    \begin{equation*}
        |b(s)|+\|\varepsilon(s)\|_{L^{2}}+\mathcal{N}_{2}(s)\le \kappa.
    \end{equation*}

    \item [(${\rm{H2}}$)] Bound related to scaling. We assume 
    \begin{equation*}
       \frac{ |b(s)|}{\lambda^{\theta}(s)}
       +\frac{\mathcal{N}_{2}(s)}{\lambda^{\theta}(s)}
       \le \kappa.
    \end{equation*}

    \item [($\rm{H3}$)] $L^{2}$-weighted bound on the right-hand side of $y_{1}$. We assume 
    \begin{equation*}
        \int_{\R}\int_{0}^{\infty}y_{1}^{100}\varepsilon^{2}(s,y)\dd y_{1}\dd y_{2}\le 10\left(1+\frac{1}{\lambda^{100}(s)}\right).
    \end{equation*}
\end{enumerate}

We now recall the refined control of $(\lambda,x,b)$ from~\cite[Lemma 3.7]{CLY}.

        \begin{lemma}[Refined control of parameters]\label{le:refin}
            Under the assumptions of Lemma~\ref{le:control}, and assume moreover {\rm{(H1)--(H3)}}. Then the quantities $\left\{J_{i}\right\}_{i=1}^{3}$ below are well-defined and satisfy the following estimates for any $s\in [0,s_{0}]$.

        \begin{enumerate}
            \item \emph{Refined control of $\lambda$.} Set 
            \begin{equation*}
              J_{1}(s)=\left(\varepsilon(s),\rho_{1}\right)
                \quad \mbox{with}\quad  \rho_{1}(y)=\frac{1}{\|F\|^{2}_{L^{2}}}\int_{-\infty}^{y_{1}}\Lambda Q(\sigma,y_{2})\dd \sigma.
            \end{equation*}
            It holds
            \begin{equation*}
                \left|\frac{\lambda_{s}}{\lambda}+b-2J_{1s}\right|
                \lesssim B^{5}b^{2}+B^{5}\mathcal{N}_{0}.
            \end{equation*}

            \item \emph{Identity of $b$.} We have 
            \begin{equation*}
\begin{aligned}
&b_{s}+\theta b^{2}-\frac{b}{\left(P,Q\right)}\left(\varepsilon,\Lambda Q+6PQ\partial_{y_{1}}Q\right)\\
&-b\left(\frac{\lambda_{s}}{\lambda}+b\right)\frac{\left(\Lambda P, Q\right)}{\left(P,Q\right)}=O\left(|b|^{3}+\int_{\R^{2}}\varepsilon^{2}e^{-\frac{|y|}{10}}\dd y\right).
\end{aligned}
\end{equation*}

            \item \emph{Refined control of $b$.} Set 
            \begin{equation*}
            \begin{aligned}
            \rho_{2}(y)&=\frac{1}{(P,Q)}\left(P(y)+F(y_{2})+h(y_{2})\right)\\
            &+\frac{(\Lambda P,Q)}{(P,Q)(\Lambda Q,Q^{3})}Q^{3}(y)-c_{1}\int_{-\infty}^{y_{1}}\Lambda Q(\sigma,y_{2})\dd \sigma.
                \end{aligned}
            \end{equation*}
            Here, we denote by $h_{2}\in \mathcal{Z}(\R)$ the even solution of the second-order ODE:
            \begin{equation*}
                -h''(y_{2})+h_{2}(y_{2})=F''(y_{2}),\quad \mbox{for any}\ y_{2}\in \R.
            \end{equation*}
            In addition, the constant $c_{1}\in \R$ is chosen to ensure that 
            \begin{equation*}
               c_{1}\left(\int_{-\infty}^{y_{1}}\Lambda Q(\sigma,y_{2})\dd \sigma,\Lambda Q\right)=\frac{1}{(P,Q)}\left(F+h_{2},\Lambda Q\right).
            \end{equation*}
            Denote 
            \begin{equation*}
                J_{2}(s)=\left(\varepsilon(s),\rho_{2}\right)
                \quad \mbox{and}\quad 
                 g_{2}(y)=\partial_{y_{1}}\rho_{2}(y).
            \end{equation*}
            It holds
            \begin{equation*}
                \left|b_{s}+\theta b^{2}+bJ_{2s}\right|\lesssim
                B^{5}|b|^{3}+\left(B^{5}|b|+1\right)\mathcal{N}_{0}.
            \end{equation*}

            \item \emph{Refined control of $\frac{b}{\lambda^{\theta}}$.} Set 
            \begin{equation*}
             J(s)=\left(\varepsilon(s),\rho\right)\quad 
             \mbox{and}\quad 
                \rho=2\theta\rho_{1}+\rho_{2}.
            \end{equation*}
            It holds
            \begin{equation*}
                \left|\frac{\dd }{\dd s}\left(\frac{b}{\lambda^{\theta}}\right)+\frac{b}{\lambda^{\theta}}J_{s}\right|
                \lesssim \frac{1}{\lambda^{\theta}}
                \left( B^{5}|b|^{3}+\left(B^{5}|b|+1\right)\mathcal{N}_{0}\right).
            \end{equation*}

            \item \emph{Refined control of $x_{2}$}. Set 
            \begin{equation*}
                \rho_{3}(y)=\frac{1}{c_{2}}\int_{-\infty}^{y_{1}}\partial_{y_{2}}Q(\sigma,y_{2})\dd \sigma\quad 
                \mbox{and}\quad J_{3}(s)=\left(\varepsilon(s),\rho_{3}\right).
            \end{equation*}
            Here, the constant $c_{2}\in \R$ is chosen to ensure that 
            \begin{equation*}
                c_{2}=\frac{1}{2}\int_{\R}
                \left(\int_{\R}\partial_{y_{2}}Q(y_{1},y_{2})\dd y_{1}\right)^{2}\dd y_{2}.
            \end{equation*}
            It holds
            \begin{equation*}
                \left|\frac{x_{2s}}{\lambda}-J_{3s}\right|\lesssim  B^{5}b^{2}+B^{5}\mathcal{N}_{0}.
            \end{equation*}
        \end{enumerate}
        \end{lemma}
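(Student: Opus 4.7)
\smallskip
The plan is to derive each of the five identities by differentiating the scalar product $J_i(s)=(\varepsilon(s),\rho_i)$ in $s$, substituting the equation for $\varepsilon$ from Lemma~\ref{le:control} (item (i)), and then carefully rearranging to isolate the geometric parameters $\lambda_s/\lambda$, $b_s$, and $x_{2s}/\lambda$. All the other source terms must then be absorbed either into a ``good'' time derivative $J_{is}$, or into the admissible error class $B^5|b|^{k}+B^5\mathcal{N}_0$. The two structural ingredients that do all the work are the identity $\partial_{y_1}\mathcal{L}P=\Lambda Q$ from Lemma~\ref{le:nonlocal} and the scalar products $(P,Q)=\tfrac14\|F\|_{L^2}^2$ and $(\Psi_b,Q)=-\tfrac{b^2}{2}\int |\widehat F|^2/(1+|\xi|^2)\,d\xi+O(|b|^3)$ from Lemma~\ref{le:local}. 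These together fix the constant $\theta$ as the ratio appearing in Section~\ref{SS:Nota}.

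\smallskip
For (i), I would integrate by parts in the leading term $(\partial_{y_1}\mathcal{L}\varepsilon,\rho_1)$, using $\mathcal{L}(\partial_{y_1}\rho_1)=\Lambda Q/\|F\|^2_{L^2}$ together with $(\varepsilon,\Lambda Q)=0$ (which is a consequence of the orthogonalities in \eqref{equ:orthe}), so that the Mod contribution from $(\Lambda Q_b,\rho_1)$ produces the term $-b/2$ up to $O(b^2)$; the pieces coming from $\Psi_b$, $b\Lambda\varepsilon$, $\partial_{y_1}R_b$, $\partial_{y_1}R_{NL}$ are estimated using Lemma~\ref{le:local} and the Cauchy--Schwarz inequality with weight $\psi_B$, which is exactly where the $B^5$ loss arises because $\rho_1$ grows polynomially as $y_1\to+\infty$. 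For (v), the same scheme with $\rho_3$ yields $x_{2s}/\lambda$ after invoking $(\varepsilon,\partial_{y_2}Q)=0$.

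\smallskip
For (ii), I would differentiate $(\varepsilon,Q)=0$ in $s$ and substitute the equation of $\varepsilon$: the $\partial_{y_1}\mathcal{L}\varepsilon$ term vanishes by self-adjointness and $\partial_{y_1}Q\in\ker\mathcal{L}$; the term $(\Psi_b,Q)$ contributes $\theta b^2$ by Lemma~\ref{le:local} item (iv) combined with the definition of $\theta$ and $(P,Q)$; the coupling $b\left(\frac{\lambda_s}{\lambda}+b\right)\frac{(\Lambda P,Q)}{(P,Q)}$ comes from the $\partial Q_b/\partial b$ part of $\mathrm{Mod}$ and the $-b\Lambda\varepsilon$ term after integration by parts; the $(\varepsilon,\Lambda Q+6PQ\partial_{y_1}Q)$ coupling comes from $-b\Lambda\varepsilon$ and $-\partial_{y_1}R_b$ paired with $Q$. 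For (iii), the profile $\rho_2$ is engineered so that the identity $b_s+\theta b^2+bJ_{2s}=\cdots$ exactly kills the linear-in-$\varepsilon$ coupling of (ii): the ODE defining $h_2$ ensures decay in $y_2$ of the $y_1$-antiderivative of $\rho_2$, and the constant $c_1$ is chosen so that $\rho_2$ is compatible with $(\varepsilon,\Lambda Q)=0$. Finally, for (iv), the combination $\rho=2\theta\rho_1+\rho_2$ telescopes items (i) and (iii) via
\begin{equation*}
\frac{d}{ds}\!\left(\frac{b}{\lambda^\theta}\right)
=\frac{1}{\lambda^\theta}\!\left(b_s+\theta b^2\right)-\theta\frac{b}{\lambda^\theta}\!\left(\frac{\lambda_s}{\lambda}+b\right),
\end{equation*}
so that the $J_{s}$ term on the right-hand side collects $2\theta J_{1s}$ and $J_{2s}$.

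\smallskip
The main obstacle I anticipate is the careful bookkeeping of the error terms in the region $y_1\gtrsim 1$, where the profiles $\rho_1$ and $\rho_2$ grow polynomially. Here one must insert the weight $\psi_B$, pay a price $B^5$ from Cauchy--Schwarz against the polynomial factors, and use the a priori bound $(\mathrm{H}3)$ together with the shape of $\mathcal{N}_0$ to conclude. The second delicate point is that the $b$-coupling in (ii) is purely algebraic and must be eliminated exactly: this forces the precise algebraic choice of $\rho_2$ (including the auxiliary function $h_2$ and the constant $c_1$), for which one reproduces the construction from~\cite[Lemma~3.7]{CLY}. Once these two points are handled, (iv) and (v) are immediate corollaries.
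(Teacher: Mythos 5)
Your overall strategy --- differentiate $J_i(s)=(\varepsilon,\rho_i)$, substitute the $\varepsilon$-equation from Lemma~\ref{le:control}, and extract the leading coefficients from $\partial_{y_1}\mathcal{L}P=\Lambda Q$, $(P,Q)=\tfrac14\|F\|_{L^2}^2$ and $(\Psi_b,Q)=-\tfrac{b^2}{2}\int|\widehat F|^2/(1+|\xi|^2)\,\dd\xi+O(|b|^3)$, then absorb the remaining terms into $B^5b^2+B^5\mathcal{N}_0$ using the weights and (H3) --- is exactly the route of the proof the paper defers to in \cite[Lemma 3.7]{CLY}, and your algebra for deducing (iv) from (i) and (iii) is correct.

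One step as written would fail. In (i) you invoke ``$(\varepsilon,\Lambda Q)=0$, a consequence of the orthogonalities in \eqref{equ:orthe}''. This is false: the orthogonality conditions are $(\varepsilon,Q)=(\varepsilon,Q^3)=(\varepsilon,\nabla Q)=0$, and $\Lambda Q$ does not lie in the span of $Q$, $Q^3$, $\nabla Q$; indeed $(\varepsilon,\Lambda Q)$ survives as a genuine term in item (ii) of the very lemma you are proving, so it cannot vanish identically. The correct mechanism for killing the leading linear term in (i) is different: one has $\partial_{y_1}\rho_1=\Lambda Q/\|F\|_{L^2}^2$ (your displayed identity $\mathcal{L}(\partial_{y_1}\rho_1)=\Lambda Q/\|F\|^2_{L^2}$ is also not right), hence
\begin{equation*}
\left(\partial_{y_1}\mathcal{L}\varepsilon,\rho_1\right)=-\left(\varepsilon,\mathcal{L}\Lambda Q\right)/\|F\|_{L^2}^2=2\left(\varepsilon,Q\right)/\|F\|_{L^2}^2=0,
\end{equation*}
using $\mathcal{L}\Lambda Q=-2Q$ and the first orthogonality condition. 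With this replacement the rest of (i) goes through: the coefficient $\tfrac12=(\Lambda Q,\rho_1)$ produced by the $(\tfrac{\lambda_s}{\lambda}+b)\Lambda Q_b$ part of ${\rm{Mod}}$ is what generates the factor $2$ in $2J_{1s}$. The same false appeal to $(\varepsilon,\Lambda Q)=0$ reappears in your motivation for the constant $c_1$ in (iii); in fact $c_1$ is fixed by an exact cancellation between $(\cdot,\Lambda Q)$ pairings arising from the modulation terms, not by an orthogonality of $\varepsilon$. Finally, a small bookkeeping slip in (ii): the coupling $b(\tfrac{\lambda_s}{\lambda}+b)\tfrac{(\Lambda P,Q)}{(P,Q)}$ originates from $(\tfrac{\lambda_s}{\lambda}+b)\,b\,(\Lambda(\chi_bP),Q)$ in the $\Lambda Q_b$ part of ${\rm{Mod}}$, while $-b_s\partial_bQ_b$ supplies the leading $-b_s(P,Q)$. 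These defects are all local and repairable and do not change the architecture of your argument.
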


        \begin{proof}
            The proof of the refined control for $(\lambda,x,b)$ relies on the standard modulation equation in Lemma~\ref{le:control} and the more refined derivation. We refer to~\cite[Lemma 3.7]{CLY} for the details of the proof.
        \end{proof}

      \subsection{Energy-Virial Lyapunov functional}\label{SS:Energy}
      In this subsection, we recall the energy-virial Lyapunov functional $\mathcal{M}_{ij}$ which will play an important role in closing the energy estimate under the bootstrap assumptions (H1)--(H3).

      \smallskip
      Recall that, for $(i,j)\in \left\{1,2\right\}$, we set 
      \begin{equation*}
          \mathcal{J}_{ij}=(1-J_{1})^{-2\theta(j-1)-2i-12}-1.
      \end{equation*}
      Here, the function $J_{1}$ is defined by 
      \begin{equation*}
      J_{1}(s)=\left(\varepsilon(s),\rho_{1}\right)\quad \mbox{with}\quad 
          \rho_{1}(y)=\frac{1}{\|F\|_{L^{2}}^{2}}\int_{-\infty}^{y_{1}}\Lambda Q(\sigma,y_{2})\dd \sigma.
      \end{equation*}

      Recall also that, for $(i,j)\in \left\{1,2\right\}$, we consider the following energy functional related to the remainder term $\varepsilon$,
      \begin{equation*}
          \mathcal{F}_{ij}=\int_{\R^{2}}\left\{|\nabla \varepsilon|^{2}\psi_{B}+(1+\mathcal{J}_{ij})\varepsilon^{2}\varphi_{i,B}
          -\frac{1}{2}\psi_{B}\left[(Q_{b}+\varepsilon)^{4}-Q_{b}^{4}-4Q^{3}_{b}\varepsilon\right]\right\}\dd y.
      \end{equation*}

      To state the virial estimate for the solution of~\eqref{equ:CP}, we should first consider the following transformed problem
      \begin{equation*}
          \eta=(1-\gamma\Delta)^{-1}\mathcal{L}\varepsilon,\quad \mbox{for any}\ s\in [0,s_{0}].
      \end{equation*}
      Here, $0<\gamma\ll 1$ is a small enough constant (depending on $B$) to be chosen later.
      In addition, we denote 
      \begin{equation*}
          \begin{aligned}
              {\rm{Mod}}_{\eta}
              &=\left(\frac{\lambda_{s}}{\lambda}+b\right)\left(\Lambda Q_{b}-\Lambda Q\right)+\frac{\lambda_{s}}{\lambda}\Lambda \varepsilon\\
              &+\left(\frac{x_{s}}{\lambda}-{\textbf{e}}_{1}\right)\cdot
              \left(\nabla Q_{b}-\nabla Q+\nabla \varepsilon\right)-b_{s}\frac{\partial Q_{b}}{\partial s}.
          \end{aligned}
      \end{equation*}

      By an elementary computation and Lemma~\ref{le:control}, we directly have the equation and orthogonality conditions for $\eta$.
      \begin{lemma}
          For all $s\in [0,s_{0}]$, we have 
          \begin{equation*}
\begin{aligned}
\partial_{s}\eta
&=\mathcal{L}\partial_{y_{1}}\eta-3\gamma(1-\gamma\Delta)^{-1}\left(\Delta(Q^{2})\partial_{y_{1}}\eta+2Q\nabla Q\cdot \nabla \partial_{y_{1}}\eta\right)\\
&-2\left(\frac{\lambda_{s}}{\lambda}+b\right)(1-\gamma\Delta)^{-1}Q
+(1-\gamma\Delta)^{-1}{\mathcal{L}}{\rm{Mod}}_{\eta}
\\
&+(1-\gamma\Delta)^{-1}\left({\mathcal{L}}\Psi_{b}-{\mathcal{L}}\partial_{y_{1}}R_{b}-{\mathcal{L}}\partial_{y_{1}}R_{NL}\right).
\end{aligned}
\end{equation*}
In addition, the function $\eta$ satisfies the following orthogonality conditions
\begin{equation*}
\left(\eta,(1-\gamma\Delta)Q\right)=
\left|\left(\eta,(1-\gamma\Delta)\nabla Q\right)\right|=0.
\end{equation*}
      \end{lemma}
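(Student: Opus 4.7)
The plan is to verify the two orthogonality conditions first, and then derive the equation for $\eta$ by differentiating its definition and substituting the equation of $\varepsilon$ from Lemma~\ref{le:control}(i). Both parts rely on the self-adjointness of $(1-\gamma\Delta)^{-1}$ and $\mathcal{L}$, together with the two algebraic identities $\mathcal{L}Q=-2Q^{3}$ (from $-\Delta Q+Q-Q^{3}=0$) and $\mathcal{L}\nabla Q=0$ (translation invariance), and the scaling identity $\mathcal{L}\Lambda Q=-2Q$.

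For the orthogonality, I would use $\bigl((1-\gamma\Delta)^{-1}f,(1-\gamma\Delta)g\bigr)=(f,g)$ to write
\begin{equation*}
\bigl(\eta,(1-\gamma\Delta)Q\bigr)=(\mathcal{L}\varepsilon,Q)=(\varepsilon,\mathcal{L}Q)=-2(\varepsilon,Q^{3})=0,
\end{equation*}
by the orthogonality $(\varepsilon,Q^{3})=0$ from \eqref{equ:orthe}. Similarly, for each component of $\nabla Q$, $\bigl(\eta,(1-\gamma\Delta)\nabla Q\bigr)=(\varepsilon,\mathcal{L}\nabla Q)=0$ since $\nabla Q\in\ker\mathcal{L}$. (The fact that $(\varepsilon,\nabla Q)=0$ is not even needed.)

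For the evolution equation, I start from $(1-\gamma\Delta)\eta=\mathcal{L}\varepsilon$, so
\begin{equation*}
\partial_{s}\eta=(1-\gamma\Delta)^{-1}\mathcal{L}\partial_{s}\varepsilon.
\end{equation*}
Substituting Lemma~\ref{le:control}(i) gives six pieces. The terms $\mathcal{L}\Psi_{b}$, $-\mathcal{L}\partial_{y_{1}}R_{b}$, $-\mathcal{L}\partial_{y_{1}}R_{NL}$ are passed through $(1-\gamma\Delta)^{-1}$ with no modification. The interesting algebra is in two places. First, I combine $\mathrm{Mod}-b\Lambda\varepsilon$ with the definition of $\mathrm{Mod}_{\eta}$:
\begin{equation*}
\mathrm{Mod}-b\Lambda\varepsilon=\mathrm{Mod}_{\eta}+\Bigl(\frac{\lambda_{s}}{\lambda}+b\Bigr)\Lambda Q+\Bigl(\frac{x_{s}}{\lambda}-{\textbf{e}}_{1}\Bigr)\cdot\nabla Q,
\end{equation*}
and then $\mathcal{L}\Lambda Q=-2Q$, $\mathcal{L}\nabla Q=0$ yield exactly the term $-2(\lambda_{s}/\lambda+b)(1-\gamma\Delta)^{-1}Q$ together with $(1-\gamma\Delta)^{-1}\mathcal{L}\mathrm{Mod}_{\eta}$.

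The one non-trivial step, and the place where the $\gamma$-correction is born, is handling the leading term $(1-\gamma\Delta)^{-1}\mathcal{L}\partial_{y_{1}}\mathcal{L}\varepsilon$. Since $\partial_{y_{1}}$ commutes with $(1-\gamma\Delta)^{-1}$, I would add and subtract $\mathcal{L}\partial_{y_{1}}\eta$ and reduce to computing the commutator $[(1-\gamma\Delta),\mathcal{L}]=-\gamma[\Delta,-3Q^{2}]=3\gamma[\Delta,Q^{2}]$ acting on $\partial_{y_{1}}\eta$. Using the Leibniz rule $\Delta(Q^{2}w)-Q^{2}\Delta w=\Delta(Q^{2})\,w+2\nabla(Q^{2})\cdot\nabla w$ and $\nabla(Q^{2})=2Q\nabla Q$, we get
\begin{equation*}
(1-\gamma\Delta)^{-1}\mathcal{L}\partial_{y_{1}}\mathcal{L}\varepsilon=\mathcal{L}\partial_{y_{1}}\eta-3\gamma(1-\gamma\Delta)^{-1}\bigl(\Delta(Q^{2})\partial_{y_{1}}\eta+2Q\nabla Q\cdot\nabla\partial_{y_{1}}\eta\bigr),
\end{equation*}
after applying $(1-\gamma\Delta)^{-1}$ back. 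Collecting everything produces the stated equation. The main (minor) obstacle is simply bookkeeping the commutator with the right sign, and reconciling the two modulation expressions $\mathrm{Mod}$ and $\mathrm{Mod}_{\eta}$; no analytic difficulty arises since the identity is purely algebraic on smooth Schwartz-type functions.
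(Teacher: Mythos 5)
Your approach is correct and is exactly the ``elementary computation'' the paper has in mind (the paper gives no details, deferring to \cite{CLY}): the orthogonality conditions follow from self-adjointness together with $\mathcal{L}Q=-2Q^{3}$ and $\mathcal{L}\nabla Q=0$, and the evolution equation follows from $\partial_{s}\eta=(1-\gamma\Delta)^{-1}\mathcal{L}\partial_{s}\varepsilon$, the identity $\mathrm{Mod}-b\Lambda\varepsilon=\mathrm{Mod}_{\eta}+(\lambda_{s}/\lambda+b)\Lambda Q+(x_{s}/\lambda-\mathbf{e}_{1})\cdot\nabla Q$ with $\mathcal{L}\Lambda Q=-2Q$, and the commutator $[\mathcal{L},1-\gamma\Delta]=-3\gamma[\Delta,Q^{2}]$. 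One point you should not pass over silently: your own Leibniz identity gives the first-order commutator term as $2\nabla(Q^{2})\cdot\nabla\partial_{y_{1}}\eta=4Q\nabla Q\cdot\nabla\partial_{y_{1}}\eta$, which differs by a factor of $2$ from the coefficient $2Q\nabla Q\cdot\nabla\partial_{y_{1}}\eta$ in the displayed statement; your derivation is the correct one, so either the stated equation carries a harmless typo or the factor must be tracked, but asserting that the two expressions ``match'' as written is an internal inconsistency in your write-up.
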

     
      Recall that, we consider the function $\widetilde{\chi}\in C^{\infty}$ with $\widetilde{\chi}\in [0,1]$ such that
\begin{equation*}
\widetilde{\chi}(y_1)=
\begin{cases}
0,&\text{ for } |y_{1}|>2,\\
1,&\text{ for } |y_{1}|\le 1.
\end{cases}
\end{equation*}
In addition, we consider the function $\psi_{0}\in C^{\infty}$ with $\psi_{0}\in (0,1]$ such that
\begin{equation*}
\psi_{0}(y_1)=
\begin{cases}
e^{6y_1},&\text{ for }y_1<-1,\\
\frac{1}{2},&\text{ for }y_1>-\frac{1}{2},
\end{cases}
\quad \mbox{with}\ \  \psi_0'(y_1)\ge 0,\ \ \mbox{for any}\ y_{1}\in\mathbb{R}.
\end{equation*}
Let $B>100$ be a large enough universal constant to be chosen later. We denote
\begin{equation*}
\psi_{0,B}(y_1)=\psi_0\left(\frac{y_1}{B}\right),\quad \mbox{for any}\ y_{1}\in \R.
\end{equation*}
Last, we denote 
\begin{equation*}
\widetilde{\chi}_{B}(y_1)=
\begin{cases}
\widetilde{\chi}\left(\frac{y_1}{2B}\right)\int_0^{y_1}\frac{2}{B}\psi_{0,B}(\sigma)\dd \sigma,\quad\quad  \mbox{for}\ y_{1}\le 0,\\
\widetilde{\chi}\left(\frac{y_1}{10B^{10}}\right)\int_0^{y_1}\frac{2}{B}\psi_{0,B}(\sigma)\dd \sigma,\quad \mbox{for}\ y_{1}> 0.
\end{cases}
\end{equation*}

We now define the energy-virial Lyapunov functional $\mathcal{M}_{ij}$ related to the remainder terms $(\varepsilon,\eta)$. More precisely, for $(i,j)\in\{1,2\}$, we set 
\begin{equation*}
    \mathcal{M}_{ij}=\mathcal{F}_{ij}+\frac{1}{B^{20}}\mathcal{P}\quad \mbox{with}\ \ 
    \mathcal{P}=\int_{\R^{2}}\eta^{2}\widetilde{\chi}_{B}\dd y.
\end{equation*}
Let $\gamma=B^{-3}>0$. Then the following estimates hold.
\begin{proposition}\label{prop:energymoni}
    Let $B>100$ be a large enough constant and $0<\kappa_{1}<B^{-100}$ be a small enough constant. Under the assumptions of Lemma~\ref{le:control}, and assume moreover {\rm{(H1)--(H3)}}. The quantities $\{\mathcal{M}_{ij}\}_{i,j=1}^{2}$ above satisy the following estimates.

    \begin{enumerate}
        \item \emph{Bound and Coercivity}. For any $s\in [0,s_{0}]$, we have 
        \begin{equation*}
          \mathcal{N}_{i}\lesssim\mathcal{M}_{ij}\lesssim \mathcal{N}_{i}.
        \end{equation*}
        \item \emph{Monotonicity property.} For any $s\in [0,s_{0}]$, we have
        \begin{equation*}
            \lambda^{\theta(j-1)}\frac{\dd}{\dd s}\left(\frac{\mathcal{M}_{ij}}{\lambda^{\theta(j-1)}}\right)
            +\frac{\nu}{B^{27}}\mathcal{N}_{i-1}\le Cb^{4}.
        \end{equation*}
        Here, $0<\nu\ll 1$ is a universal constant independent with $B$ and $C=C(B)>1$ is a constant dependent only on $B$.
    \end{enumerate}
\end{proposition}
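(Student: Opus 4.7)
The plan is to prove both parts by direct calculation, leveraging the equations from Lemma~\ref{le:control}, the orthogonality conditions~\eqref{equ:orthe}, and the two coercivity estimates~\eqref{est:coer} and~\eqref{est:coer2}.

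For part (i), I would first expand the nonlinear piece of $\mathcal{F}_{ij}$ as $6Q_{b}^{2}\varepsilon^{2}+4Q_{b}\varepsilon^{3}+\varepsilon^{4}$. The quadratic-in-$\varepsilon$ piece is absorbed into $\mathcal{N}_{i}$ using the exponential decay of $Q_{b}$ from Lemma~\ref{le:local}(i); the cubic and quartic pieces are $o(\mathcal{N}_{i})$ by Gagliardo--Nirenberg and the $L^{2}$-smallness in (H1). The factor $1+\mathcal{J}_{ij}$ is uniformly bounded because $|J_{1}|\lesssim\|\varepsilon\|_{L^{2}}\ll 1$, and $\mathcal{P}\lesssim\|\eta\|_{H^{1}}^{2}\lesssim\|\varepsilon\|_{H^{1}}^{2}$ by elliptic regularity for $(1-\gamma\Delta)^{-1}\mathcal{L}$. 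For the lower bound, I apply~\eqref{est:coer} to $\varepsilon$; the scalar-product corrections $(\varepsilon,Q^{3})$ and $(\varepsilon,\nabla Q)$ vanish thanks to~\eqref{equ:orthe}, giving $(\mathcal{L}\varepsilon,\varepsilon)\ge\mu_{1}\|\varepsilon\|_{H^{1}}^{2}$. Transferring this coercivity into the weighted integrals that define $\mathcal{N}_{i}$, and absorbing the nonlinear perturbations by smallness, yields $\mathcal{N}_{i}\lesssim\mathcal{M}_{ij}$.

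For part (ii), I would differentiate $\mathcal{F}_{ij}$ using the equation of $\varepsilon$ in Lemma~\ref{le:control}(i). After integration by parts, the contribution of $\partial_{y_{1}}\mathcal{L}\varepsilon$ produces a localized virial quadratic form whose weights involve $\psi_{B}'$ and $\varphi_{i,B}'$; the design of these weights is such that this form is bounded above by $-\nu B^{-27}\mathcal{N}_{i-1}$ plus a residual quadratic piece that is \emph{not} coercive by itself. The modulation contribution is controlled by Lemma~\ref{le:control}(ii) together with (H1); the error term $\Psi_{b}$ is $O(b^{2})$ pointwise by Lemma~\ref{le:local}(ii), which after Cauchy--Schwarz produces $O(b^{4})$; the nonlinear pieces $\partial_{y_{1}}R_{b}$ and $\partial_{y_{1}}R_{NL}$ contribute $o(\mathcal{N}_{i-1}/B^{27})$ by smallness. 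The scaling factor $\lambda^{\theta(j-1)}\tfrac{\dd}{\dd s}(\cdot/\lambda^{\theta(j-1)})$ produces an extra $-\theta(j-1)(\lambda_{s}/\lambda)\mathcal{M}_{ij}$ term, which is precisely the role of the corrector $\mathcal{J}_{ij}$: its $s$-derivative furnishes the cancellation through the refined identity $\lambda_{s}/\lambda+b=2J_{1s}+O(B^{5}b^{2}+B^{5}\mathcal{N}_{0})$ from Lemma~\ref{le:refin}(i).

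The main obstacle is exactly the non-coercive residue left over in $\tfrac{\dd}{\dd s}\mathcal{F}_{ij}$; this is what the virial correction $B^{-20}\mathcal{P}$ is built to absorb. I would differentiate $\mathcal{P}$ using the $\eta$-equation stated just before the proposition, integrate by parts, and identify the leading quadratic contribution as $-(\mathcal{A}\eta,\eta)$ localized by $\widetilde{\chi}_{B}$. Because $\eta$ satisfies the two orthogonality conditions $(\eta,(1-\gamma\Delta)Q)=0$ and $(\eta,(1-\gamma\Delta)\nabla Q)=0$, the scalar-product corrections on the right of~\eqref{est:coer2} are controllable, so this leading form is dominated by $-c\|\eta\|_{H^{1}(\widetilde{\chi}_{B})}^{2}$ up to lower-order remainders. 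The weight $B^{-20}$ is then tuned to be large enough that this negative piece absorbs the residue produced by $\mathcal{F}_{ij}$, yet small enough that its own contribution to $\tfrac{\dd}{\dd s}\mathcal{M}_{ij}$ is subdominant to $-\nu B^{-27}\mathcal{N}_{i-1}$. The modulation, error, and nonlinear pieces appearing in $\partial_{s}\eta$ contribute only $O(b^{4})+o(\mathcal{N}_{i-1}/B^{27})$ thanks to Lemma~\ref{le:local}(ii), the hypotheses (H1)--(H3), and the tuning $\gamma=B^{-3}$. Assembling all these pieces closes the monotonicity bound.
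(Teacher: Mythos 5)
Your outline follows the same architecture as the paper's proof (which itself defers all details to \cite[Proposition 4.12]{CLY}): coercivity of $\mathcal{L}$ via~\eqref{est:coer} and the orthogonality conditions for part (i), and for part (ii) a weighted energy estimate on $\mathcal{F}_{ij}$ whose non-coercive local residue is absorbed by the transformed virial term $B^{-20}\mathcal{P}$ through the coercivity of $\mathcal{A}$ in~\eqref{est:coer2}, with the corrector $\mathcal{J}_{ij}$ and Lemma~\ref{le:refin}(i) cancelling the dangerous $J_{1s}$ contributions. This is essentially the paper's approach, correct at the level of detail the paper itself provides.
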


\begin{proof}
    The proof of the bound and coercivity relies on the definition of $\mathcal{M}_{ij}$ and the coercivity of the operator $\mathcal{L}$ in~\eqref{est:coer}. The proof of the monotonicity property can be split into two steps. The first one is the standard energy estimate for $\mathcal{F}_{ij}$ which is only related to $\varepsilon$, and the second one is the virial estimate for $\mathcal{P}$ which is only related to $\eta$. It is worth mentioning here that the virial estimate is essentially based on the coercivity of the operator $\mathcal{A}$ in~\eqref{est:coer2} which has been numerically verified. We refer to~\cite[Proposition 4.12]{CLY} for the details of the proof.
\end{proof}

      \subsection{Almost monotonicity of the mass}
In this subsection, we deduce the almost monotonicity of the mass on the right-hand side of $y_{1}$. 

\smallskip
First, we recall the following 2D weighted Sobolev estimate for future reference.

\begin{lemma}\label{le:weightedSobolev}
    Let $\omega:\R^{2}\to(0,\infty)$ be a $C^{1}$ function such that $\|\nabla \omega/\omega\|_{L^{\infty}}\lesssim 1$. In addition, for any $R>0$, we denote 
    \begin{equation*}
        B_{R}=\left\{(x_{1},x_{2})\in \R^{2}:|x_{1}|>R\ \emph{or}\ |x_{2}|>R\right\}.
    \end{equation*}
    Then for any $f\in H^{1}(\R^{2})$, we have 
    \begin{equation*}
        \|f^{2}\sqrt{\omega}\|^{2}_{L^{2}(B_{R})}
        \lesssim \|f\|_{L^{2}(B_{R})}^{2}\left(\int_{B_{R}}(|\nabla f|^{2}+f^{2})\omega \dd x\right).
    \end{equation*}
\end{lemma}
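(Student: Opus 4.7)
The plan is to adapt the tensor-product proof of the 2D Ladyzhenskaya inequality $\int\phi^{4}\lesssim \|\phi\|_{L^{2}}^{2}\|\nabla\phi\|_{L^{2}}^{2}$ by tracking the weight $\sqrt{\omega}$ through the slicing argument, using the hypothesis $|\nabla\omega|\lesssim\omega$ to absorb the derivative of $\sqrt{\omega}$. First I would establish a one-dimensional slice bound: for $(x_{1},x_{2})\in B_{R}$, the $x_{1}$-slice $L_{x_{2}}\cap B_{R}$ equals either $\R$ (if $|x_{2}|>R$) or the disjoint union of the two rays $(-\infty,-R)$ and $(R,\infty)$ (if $|x_{2}|\le R$), so the ray containing $(x_{1},x_{2})$ reaches infinity inside $B_{R}$. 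The fundamental theorem of calculus then gives
\begin{equation*}
(f^{2}\sqrt{\omega})(x_{1},x_{2})\le \int_{L_{x_{2}}\cap B_{R}}\bigl|\partial_{1}(f^{2}\sqrt{\omega})\bigr|(x_{1}',x_{2})\,\dd x_{1}'.
\end{equation*}
Expanding $\partial_{1}(f^{2}\sqrt{\omega})=2f\partial_{1}f\sqrt{\omega}+\tfrac{1}{2}f^{2}\omega^{-1/2}\partial_{1}\omega$ and using $|\partial_{1}\omega|\le|\nabla\omega|\lesssim\omega$, the integrand is pointwise bounded by $|f||\partial_{1}f|\sqrt{\omega}+f^{2}\sqrt{\omega}$, and two applications of Cauchy--Schwarz yield
\begin{equation*}
(f^{2}\sqrt{\omega})(x_{1},x_{2})\lesssim \Bigl(\int_{L_{x_{2}}\cap B_{R}}f^{2}\,\dd x_{1}'\Bigr)^{\!1/2}\Bigl(\int_{L_{x_{2}}\cap B_{R}}(|\partial_{1}f|^{2}+f^{2})\omega\,\dd x_{1}'\Bigr)^{\!1/2}=:G(x_{2}).
\end{equation*}

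The symmetric argument with $x_{1}$ and $x_{2}$ exchanged produces $(f^{2}\sqrt{\omega})(x_{1},x_{2})\lesssim H(x_{1})$, where $H$ is the analogous quantity defined via $x_{2}$-slices. Since $G$ depends only on $x_{2}$ and $H$ only on $x_{1}$, the pointwise bound $f^{4}\omega=(f^{2}\sqrt{\omega})^{2}\lesssim G(x_{2})H(x_{1})$ on $B_{R}$ together with Fubini gives
\begin{equation*}
\int_{B_{R}}f^{4}\omega\,\dd x\lesssim \Bigl(\int_{\R}G(x_{2})\,\dd x_{2}\Bigr)\Bigl(\int_{\R}H(x_{1})\,\dd x_{1}\Bigr).
\end{equation*}
A further Cauchy--Schwarz inside each factor, combined with the identities $\int_{\R}\int_{L_{x_{2}}\cap B_{R}}f^{2}\,\dd x_{1}'\dd x_{2}=\|f\|_{L^{2}(B_{R})}^{2}$ and its weighted analogue, yields
\begin{equation*}
\int_{\R}G(x_{2})\,\dd x_{2}\lesssim\|f\|_{L^{2}(B_{R})}\Bigl(\int_{B_{R}}(|\partial_{1}f|^{2}+f^{2})\omega\,\dd x\Bigr)^{\!1/2},
\end{equation*}
and similarly for $\int H(x_{1})\,\dd x_{1}$. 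Multiplying these two bounds and collapsing $\sqrt{(\int(|\partial_{1}f|^{2}+f^{2})\omega)(\int(|\partial_{2}f|^{2}+f^{2})\omega)}\le \int(|\nabla f|^{2}+f^{2})\omega$ via AM--GM completes the proof.

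The main subtlety will be the slice integration in the regime $|x_{2}|\le R$: the slice then splits into two disjoint rays, and the bound on $(f^{2}\sqrt{\omega})(x_{1},x_{2})$ must be obtained by integrating from $\pm\infty$ along the particular ray containing $(x_{1},x_{2})$, then controlled (rather than equated) by the integral over the full slice $L_{x_{2}}\cap B_{R}$. This is precisely where the geometry of $B_{R}$ as the complement of a bounded square enters: it guarantees that the one-dimensional integration can always be carried out inside $B_{R}$, so that the final right-hand side is an integral over $B_{R}$ rather than over all of $\R^{2}$.
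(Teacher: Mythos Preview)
Your proof is correct and follows essentially the same approach the paper indicates: the Fundamental Theorem of Calculus on one-dimensional slices combined with Cauchy--Schwarz, which is precisely the ``standard argument based on the Fundamental Theorem and the 1D weighted Sobolev estimate'' the paper defers to~\cite{CLY,FHRY}. Your careful treatment of the slice geometry when $|x_{2}|\le R$ (integrating along the ray to infinity inside $B_{R}$ and then dominating by the full slice) is exactly the point that makes the estimate localize to $B_{R}$, and your handling of the weight via $|\nabla\omega|\lesssim\omega$ is the standard way to absorb the derivative of $\sqrt{\omega}$.
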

\begin{proof}
    The proof relies on a standard argument based on the Fundamental Theorem and the 1D weighted Sobolev estimate. We refer to~\cite[Lemma 4.3]{CLY} and~\cite[Lemma 6.1]{FHRY} for the details of the proof.
\end{proof}

For $A\gg 1$ large enough to be chosen later, we set
\begin{equation*}
    \psi(x_{1})=\frac{2}{\pi}\arctan \left(e^{-x_{1}}\right)\quad\mbox{and}\quad 
    \psi_{A}(x_{1})=\psi\left(\frac{x_{1}}{A}\right)\quad 
    \mbox{on}\ \R.
\end{equation*}
Note that, from the definition of $\psi_{A}$, we see that 
\begin{equation}\label{est:pointpsi}
    \psi'_{A}<0,\quad  \left|\psi''_{A}\right|\lesssim \frac{1}{A}\left|\psi'_{A}\right|\quad \mbox{and}\quad 
    \left|\psi'''_{A}\right|\lesssim \frac{1}{A^{2}}\left|\psi'_{A}\right|,\quad \mbox{on}\ \R.
\end{equation}
In addition, for any $(t_{1},x_{0})\in [0,t_{0}]\times (0,\infty)$, we define
\begin{equation*}
    \mathcal{I}_{t_{1},x_{0}}(t)=\int_{\R^{2}}\phi^{2}(t,x)\psi_{A}\left(x_{1}-x_{0}-x_{1}(t_{1})-\frac{1}{4}\left(x_{1}(t)-x_{1}(t_{1})\right)\right)\dd x_{1}\dd x_{2}.
\end{equation*}

Let us now recall the almost monotonicity of the mass on the right-hand side of $y_{1}$ from~\cite{FHRY}. The proof is similar to~\cite[Lemma 6.2]{FHRY}
and~\cite[Lemma 4 and Lemma 6]{MMDUKE}, but it is given for the sake of completeness and the reader's convenience.
\begin{lemma}[\cite{FHRY}]\label{le:monomass}
Under the assumptions of Lemma~\ref{le:control}, and assume moreover $0<\lambda(t)<\frac{3}{2}$ for any $t\in [0,t_{1}]$. Then there exists $A\gg 1$ such that for any $x_{0}>0$, 
\begin{equation*}
    \mathcal{I}_{t_{1},x_{0}}(t)\le \mathcal{I}_{t_{1},x_{0}}(t_{1})+Ae^{-\frac{x_{0}}{A}},\quad \mbox{for any}\ \  t\in [0,t_{1}].
\end{equation*}
    Here, $A$ is a universal large constant independent with $\phi(t)$.
\end{lemma}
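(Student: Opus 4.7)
The plan is to compute $\tfrac{d}{dt}\mathcal{I}_{t_1,x_0}(t)$ using \eqref{equ:CP}, extract a virial-type identity with a favorable sign, and show that the only obstructions to its monotonicity carry an exponentially small factor $e^{-x_0/A}$. Write $\mu(t,x_1):=-\psi_A'\!\left(x_1-x_0-x_1(t_1)-\tfrac14(x_1(t)-x_1(t_1))\right)>0$. Inserting $\partial_t\phi=-\partial_{x_1}(\Delta\phi+\phi^3)$, integrating by parts (twice in $x_1$ for the $\partial_{x_1}^3\phi$ contribution, once in $x_2$ and once in $x_1$ for the $\partial_{x_1}\partial_{x_2}^2\phi$ contribution, and once in $x_1$ for the nonlinearity), and accounting for the $-\tfrac14 x_1'(t)$ contribution from the moving shift of the weight's center, I expect
\begin{equation*}
\frac{d}{dt}\mathcal{I}_{t_1,x_0}(t)=3\!\int\mu(\partial_{x_1}\phi)^2\,\dd x+\!\int\mu(\partial_{x_2}\phi)^2\,\dd x+\frac{x_1'(t)}{4}\!\int\mu\phi^2\,\dd x-\!\int\mu''\phi^2\,\dd x-\frac{1}{2}\!\int\mu\phi^4\,\dd x.
\end{equation*}
By Lemma~\ref{le:control}, $|x_{1s}/\lambda-1|\lesssim \alpha(\kappa)$, and the hypothesis $\lambda(t)<3/2$ then forces $x_1'(t)=x_{1s}/\lambda^3\gtrsim \lambda^{-2}\geq 4/9$ on $[0,t_1]$. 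Hence the first three terms are non-negative with strictly positive mass term, and by \eqref{est:pointpsi} the bound $|\mu''|\lesssim A^{-2}\mu$ lets me absorb the fourth term into $\tfrac{x_1'(t)}{4}\!\int\mu\phi^2$ once $A$ is chosen large enough.

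The only remaining obstruction is $-\tfrac12\!\int\mu\phi^4$. The center $x_{\rm c}(t):=x_0+x_1(t_1)+\tfrac14(x_1(t)-x_1(t_1))$ of $\mu$ satisfies $x_{\rm c}(t)-x_1(t)=x_0+\tfrac{3}{4}(x_1(t_1)-x_1(t))\ge x_0$ on $[0,t_1]$, so the soliton always stays at distance $\ge x_0$ to the left of where $\mu$ is concentrated. I would split $\R^2$ along $x_1=x_{\rm c}(t)-\tfrac{x_0}{2}$: on the right half the $L^2$-mass of $\phi$ is $O(\|\varepsilon\|_{L^2}^2)+O(e^{-x_0/(2\lambda)})$ (only the exponential tail of the soliton reaches there), and Lemma~\ref{le:weightedSobolev} with $\omega=\mu$ bounds the corresponding piece of $\int\mu\phi^4$ by $\|\phi\|_{L^2(\text{right half})}^2\int\mu(|\nabla\phi|^2+\phi^2)$, which is absorbed into the positive gradient and mass terms; on the left half, the pointwise bound $\mu\lesssim A^{-1}e^{-x_0/(2A)}$ produces a contribution of order $e^{-x_0/(2A)}$ times a standard Gagliardo--Nirenberg quantity in $\phi$.

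Combining these estimates yields a differential inequality of the shape $\tfrac{d}{dt}\mathcal{I}_{t_1,x_0}(t)\geq -Ce^{-x_0/A}\,g(t)$, where the time-integral of $g(t)$ over $[0,t_1]$ is absorbed into the exponential prefactor after possibly enlarging $A$; integrating from $t$ to $t_1$ then delivers $\mathcal{I}_{t_1,x_0}(t)\leq \mathcal{I}_{t_1,x_0}(t_1)+Ae^{-x_0/A}$. The main technical obstacle is the $\phi^4$ estimate at \emph{critical} mass: the naive 2D Gagliardo--Nirenberg bound would give only $\int\mu\phi^4\lesssim \|Q\|_{L^2}^2\int\mu|\nabla\phi|^2$, with no absorbable slack against the $\|Q\|_{L^2}^2$ prefactor. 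The resolution, and the mechanism responsible for the $e^{-x_0/A}$ in the conclusion, is precisely the spatial separation (at least $x_0$) between the soliton and the support of $\mu$, converted into an exponential gain by combining the exponential decay of $\psi_A'$ with the localized weighted Sobolev inequality of Lemma~\ref{le:weightedSobolev}.
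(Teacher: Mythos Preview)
Your virial computation, the absorption of the $\mu''$ term via \eqref{est:pointpsi}, and the idea of using Lemma~\ref{le:weightedSobolev} to absorb the far-from-soliton part of $\int\mu\,\phi^4$ into the positive gradient and mass terms are all correct and match the paper. The gap is in your treatment of the ``left half'' and the claimed time-integrability of $g(t)$.

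On your left half $\{x_1<x_{\rm c}(t)-\tfrac{x_0}{2}\}$ you bound $\mu$ uniformly by $A^{-1}e^{-x_0/(2A)}$ and then apply Gagliardo--Nirenberg. This produces a factor $\|\nabla\phi(t)\|_{L^2}^2\sim\lambda^{-2}(t)\sim x_1'(t)$, so that $g(t)\sim x_1'(t)$ and
\[
\int_t^{t_1}g(\sigma)\,\dd\sigma\;\sim\;x_1(t_1)-x_1(t),
\]
which is \emph{not} bounded under the hypotheses (indeed in the applications $\lambda\to 0$ and $x_1$ travels an arbitrarily large distance). No enlargement of $A$ can compensate for this, so the last step of your argument does not close.

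The paper repairs this by splitting around the \emph{soliton position} at a fixed scale $R$ (independent of $x_0$ and $t$) rather than around the weight center. Outside the box $\mathcal{B}_{2R}=\{|x_1-x_1(t)|\le R,\ |x_2-x_2(t)|\le R\}$ one has $\|\phi\|_{L^2}^2\lesssim\alpha(\kappa)+e^{-R/4}$ and absorbs via Lemma~\ref{le:weightedSobolev}, exactly as you propose. Inside $\mathcal{B}_{2R}$, however, one uses that the argument of $\psi_A'$ satisfies $|\widetilde{x}_1|\ge x_0+\tfrac14\bigl(x_1(t_1)-x_1(t)\bigr)-R$, which gives the sharper pointwise bound
\[
\Bigl|\int_{\mathcal{B}_{2R}}\mu\,\phi^4\,\dd x\Bigr|\;\lesssim\;\frac{e^{R/A}}{A}\,e^{-x_0/A}\,e^{-\frac{1}{4A}(x_1(t_1)-x_1(t))}\,x_1'(t).
\]
The extra factor $e^{-\frac{1}{4A}(x_1(t_1)-x_1(t))}$ is precisely what yields time-integrability: $\int_t^{t_1}e^{-\frac{1}{4A}(x_1(t_1)-x_1(\sigma))}x_1'(\sigma)\,\dd\sigma\lesssim A$. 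Your splitting at $x_{\rm c}(t)-\tfrac{x_0}{2}$ discards this factor, because the worst point of your left half sits at its right edge, where $\mu\sim A^{-1}e^{-x_0/(2A)}$ carries no decay in $x_1(t_1)-x_1(t)$. Moving the cut to a fixed distance $R$ from the soliton (rather than a fixed fraction of $x_0$ from the weight center) is the missing idea.
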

\begin{proof}
To simplify notation, we denote 
\begin{equation*}
    \widetilde{x}_{1}=x_{1}-x_{0}-x_{1}(t_{1})-\frac{1}{4}\left(x_{1}(t)-x_{1}(t_{1})\right).
\end{equation*}
In addition, for $R\gg 1$ large enough to be chosen later, we denote
    \begin{equation*}
    \begin{aligned}
        \mathcal{B}_{1R}&=\left\{(x_{1},x_{2})\in \mathbb{R}^{2}:|x_{1}-x_{1}(t)|\ge R\ \mbox{or}\ |x_{2}-x_{2}(t)|\ge R\right\},\\
         \mathcal{B}_{2R}&=\left\{(x_{1},x_{2})\in \mathbb{R}^{2}:|x_{1}-x_{1}(t)|\le R\ \mbox{and}\ |x_{2}-x_{2}(t)|\le R\right\}.
        \end{aligned}
    \end{equation*}

{\it Step 1. First estimate on $\mathcal{I}_{t_{1},x_{0}}$.} We claim that 
\begin{equation}\label{est:I1}
     \mathcal{I}_{t_{1},x_{0}}(t)-\mathcal{I}_{t_{1},x_{0}}(t_{1})
     \lesssim\int_{t}^{t_{1}}\int_{\mathcal{B}_{2R}}\phi^{4}
     \left|\psi'_{A}(\widetilde{x}_{1})\right|\dd x_{1}\dd x_{2}.
\end{equation}
   Indeed, using~\eqref{equ:CP} and integration by parts, we compute 
    \begin{equation*}
    \begin{aligned}
        \frac{\dd }{\dd t}\mathcal{I}_{t_{1},x_{0}}(t)
        &=-\int_{\R^{2}}\left(3(\partial_{x_{1}}\phi)^{2}+(\partial_{x_{2}}\phi)^{2}\right)\psi'_{A}(\widetilde{x}_{1})\dd x_{1}\dd x_{2}\\
        &+\int_{\R^{2}}\phi^{2}\left(\psi'''_{A}(\widetilde{x}_{1})-\frac{x_{1t}}{4}\psi'_{A}(\widetilde{x}_{1})\right)\dd x_{1}\dd x_{2}
        +\frac{3}{2}\int_{\R^{2}}\phi^{4}\psi'_{A}(\widetilde{x}_{1})\dd x_{1}\dd x_{2}.
        \end{aligned}
    \end{equation*}
    First, from Lemma~\ref{le:decom}, Lemma~\ref{le:control} and $0<\lambda(t)<\frac{3}{2}$, we have 
    \begin{equation*}
       \left| \lambda^{2}x_{1t}-1\right|\lesssim \alpha(\kappa)\Longrightarrow
       x_{1t}\ge \frac{1-\alpha(\kappa)}{\lambda^{2}}\ge \frac{1}{3}.
    \end{equation*}
    Therefore, for $A\gg 1$ large enough, we deduce that 
    \begin{equation*}
        \int_{\R^{2}}\phi^{2}\left(\psi'''_{A}(\widetilde{x}_{1})-\frac{x_{1t}}{4}\psi'_{A}(\widetilde{x}_{1})\right)\dd x_{1}\dd x_{2}\ge -\frac{1}{20}\int_{\R^{2}}\phi^{2}\psi'_{A}(\widetilde{x}_{1})\dd x_{1}\dd x_{2}.
    \end{equation*}
    Recall that, we have the following decomposition,
    \begin{equation*}
        \lambda(t)\phi(t,\lambda(t)y+x(t))=Q_{b(t)}(y)+\varepsilon(t,y).
    \end{equation*}
   Therefore, from Lemma~\ref{le:decom}, the exponential decay of $Q$ and $0<\lambda(t)<\frac{3}{2}$,
    \begin{equation*}
        \|\phi\|^{2}_{L^{2}(\mathcal{B}_{1R})}
        \lesssim  \alpha(\kappa)+e^{-\frac{R}{\lambda(t)}}
        \lesssim \alpha(\kappa)+e^{-\frac{R}{4}}.
    \end{equation*}
    Based on the above estimate and Lemma~\ref{le:weightedSobolev}, we obtain
    \begin{equation*}
    \left(\alpha(\kappa)+e^{-\frac{R}{4}}\right)
        \int_{\R^{2}}\left(|\nabla \phi|^{2}+\phi^{2}\right)\psi'_{A}(\widetilde{x}_{1})\dd x_{1}\dd x_{2}\lesssim
         \int_{\mathcal{B}_{1R}}\phi^{4}\psi'_{A}(\widetilde{x}_{1})\dd x_{1}\dd x_{2}.
    \end{equation*}
    Combining the above estimates, we obtain 
    \begin{equation*}
        \frac{\dd }{\dd t}\mathcal{I}_{t_{1},x_{0}}(t)\gtrsim 
         \int_{\mathcal{B}_{2R}}\phi^{4}\psi'_{A}(\widetilde{x}_{1})\dd x_{1}\dd x_{2},\quad \mbox{on}\ [0,t_{1}].
    \end{equation*}
    Integrating the above estimate over $[t,t_{1}]$, we complete the proof of~\eqref{est:I1}.

    \smallskip
{\it    Step 2. Conclusion.} Note that, for any $x_1\in \mathcal{B}_{2R}$, we have 
    \begin{equation*}
        \left|\widetilde{x}_{1}\right|=
        \left|x_{1}-x_{1}(t)-x_{0}+\frac{1}{4}\left(x_{1}(t)-x_{1}(t_{1})\right)\right|
        \ge x_{0}+\frac{1}{4}\left(x_{1}(t_{1})-x_{1}(t)\right)-R.
    \end{equation*}
    It follows from the definition of $\psi_{A}$ that 
    \begin{equation*}
        \left\|\psi'_{A}(\widetilde{x}_{1})\right\|_{L^{\infty}(\mathcal{B}_{2R})}
        \lesssim 
        \frac{1}{A}\exp \left(-\frac{x_{0}}{A}+\frac{R}{A}
        -\frac{1}{4A}\left(x_{1}(t_{1})-x_{1}(t)\right)
        \right).
    \end{equation*}
    Based on the above estimate and the fact that $\|\nabla \phi(t)\|^{2}_{L^{2}}\sim \lambda^{-2}\lesssim x_{1t}$, we have
    \begin{equation*}
    \begin{aligned}
       \left| \int_{\mathcal{B}_{2R}}\phi^{4}\psi'_{A}(\widetilde{x}_{1})\dd x_{1}\dd x_{2}\right|
       &\lesssim \left\|\psi'_{A}(\widetilde{x}_{1})\right\|_{L^{\infty}
       (\mathcal{B}_{2R})}x_{1t}\\
       &\lesssim
        \frac{1}{A}\exp \left(-\frac{x_{0}}{A}+\frac{R}{A}
        -\frac{1}{4A}\left(x_{1}(t_{1})-x_{1}(t)\right)
        \right)x_{1t}.
       \end{aligned}
    \end{equation*}
    Integrating the above estimate over $[t,t_{1}]$, we complete the proof of Lemma~\ref{le:monomass}.
\end{proof}

\section{End of the proof for Theorem~\ref{thm:main}}\label{S:End}
\subsection{Main properties of minimal mass solutions}\label{SS:Mini}
In this subsection, we introduce the main properties related to the minimal mass blow-up solution. In what follows, we will consider the blow-up solution with a negative blow-up time. Due to the invariance property under time reversal for equation~\eqref{equ:CP}, the statement of Theorem~\ref{thm:main} for the cases of forward time and backward time are equivalent. 

\begin{remark}
Inspired by the previous work~\cite[Lemma 2.11]{MMR1}, we will show the exponential decay on the right side of $y_{1}$ for the minimal mass blow-up solution with negative blow-up time (see more details in Proposition~\ref{prop:decay}). Based on this, the minimal mass blow-up solution will enter the monotonicity regime and thus we could use the law of $(\lambda,b)$ and the monotonicity property of energy-virial Lyapunov functional that established in Section~\ref{SS:Geome} and Section~\ref{SS:Energy} to control the minimal mass blow-up solution. This is the main reason why we consider the backward-in-time evolution of~\eqref{equ:CP} at the end of the proof for Theorem~\ref{thm:main}.
\end{remark}

We now introduce the following standard geometric decomposition property related to the minimal mass blow-up solution which blows up in backward time.

\begin{proposition}[Decomposition property]\label{pro:decommini}
    Let $\phi(t)$ be a minimal mass blow-up solution which blows up backward in time $-\infty\le T<0$. Then there exists $t_{1}\in (T,0)$ close to $T$, such that for all $t\in (T,t_{1}]$, $\phi(t)$ or $-\phi(t)$ admits a decomposition $\left(\lambda(t),b(t),x(t),\varepsilon(t)\right)$ as in Lemma~\ref{le:decom} with
    \begin{equation*}
    \lambda(t)\to 0 \quad \mbox{as}\ t\downarrow T\quad \mbox{and}\quad 
    \lambda(t)\le \lambda(t_{1})\ \mbox{on}\ (T,t_{1}].
    \end{equation*}
    Moreover, for any $t\in (T,t_{1}]$, we have
    \begin{equation*}
    b(t)<0,\quad E(\phi(t))=E_{0}>0\quad \mbox{and}\quad     |b(t)|+\|\varepsilon(t)\|_{H^{1}}^{2}\lesssim \lambda^{2}(t)E_{0}.
    \end{equation*}
\end{proposition}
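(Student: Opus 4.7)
\textbf{Proof plan for Proposition~\ref{pro:decommini}.} The plan combines three ingredients: positivity of the energy forced by mass-criticality together with blow-up; the variation property plus the standard modulation lemma to set up the decomposition near $T$; and the two rough estimates of Lemma~\ref{le:blambda} to read off the relations between $b$, $\lambda$, and $E_0$.

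First I would establish $E_0 := E(\phi_0) > 0$. The sharp Gagliardo-Nirenberg inequality recalled in the introduction together with $\|\phi_0\|_{L^2}=\|Q\|_{L^2}$ already gives $E(\phi(t))\ge 0$ for all $t$. If $E_0=0$, equality in Gagliardo-Nirenberg would force $\phi(t)$ to be a modulated ground state at every $t$, contradicting $\|\nabla\phi(t)\|_{L^2}\to\infty$ as $t\downarrow T$. Hence $E_0>0$.

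Next, since $E(\phi(t))=E_0$ is conserved while $\|\nabla\phi(t)\|_{L^2}^2\to\infty$, the ratio $E(\phi(t))/\|\nabla\phi(t)\|_{L^2}^2$ tends to $0$. For every $\delta_2>0$ small, Lemma~\ref{le:varia} then applies for $t$ sufficiently close to $T$ and produces a sign $\sigma_0(t)\in\{-1,1\}$ together with modulation parameters so that $\sigma_0(t)\phi(t)$ admits the rough decomposition~\eqref{equ:assumdecom} with error as small as we please in $H^1$. Local uniqueness of the modulation parameters inside the soliton tube, combined with continuity of $t\mapsto\phi(t)$ in $H^1$, forces $\sigma_0$ to be locally constant in $t$; after replacing $\phi$ by $-\phi$ if necessary, I may take $\sigma_0\equiv 1$ on some interval $(T,\overline{t}\,]$. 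On this interval Lemma~\ref{le:decom} produces the $C^1$ refined decomposition $(\lambda,x,b,\varepsilon)$ satisfying the orthogonality conditions~\eqref{equ:orthe} together with smallness of $|b|+\|\varepsilon\|_{H^1}$.

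Third, I would invoke Lemma~\ref{le:blambda}. Part~(ii) yields directly $|b(t)|+\|\varepsilon(t)\|_{H^1}^2\lesssim \lambda^2(t)\,E_0$. Part~(i) yields $b(t)\le 0$; strict negativity follows because $b(t_*)=0$ would force $\varepsilon(t_*)\equiv 0$, so $\phi(t_*)$ would be a modulated ground state with $E(\phi(t_*))=\lambda(t_*)^{-2}E(Q)=0$ (using $E(Q)=0$, a consequence of the standard 2D mass-critical Pohozaev identities), contradicting $E_0>0$. Finally, the identity $\phi(t,x)=\lambda(t)^{-1}(Q_b+\varepsilon)(t,(x-x(t))/\lambda(t))$ together with smallness of $|b|+\|\varepsilon\|_{H^1}$ gives $\|\nabla\phi(t)\|_{L^2}\sim \lambda(t)^{-1}\|\nabla Q\|_{L^2}$, hence $\lambda(t)\to 0$ as $t\downarrow T$. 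Since $\lambda$ is continuous and small near $T$, it attains its maximum on $(T,\overline{t}\,]$ at some $t_1$, and then $\lambda(t)\le \lambda(t_1)$ for all $t\in(T,t_1]$ by construction. The only mildly delicate points, rather than genuine obstacles, are the sign normalization to make the decomposition well-defined globally on the interval and the choice of $t_1$ at the site of the maximum of $\lambda$; the analytic content is entirely absorbed by Lemmas~\ref{le:varia} and~\ref{le:blambda}.
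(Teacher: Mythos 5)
Your proposal is correct and follows essentially the same route as the paper: Lemma~\ref{le:varia} plus Lemma~\ref{le:decom} to set up the decomposition near $T$, the relation $\|\nabla\phi\|_{L^2}\sim\lambda^{-1}$ to get $\lambda(t)\to 0$, and Lemma~\ref{le:blambda} for the sign of $b$ and the bound $|b|+\|\varepsilon\|_{H^1}^2\lesssim\lambda^2 E_0$. The only (harmless) variations are cosmetic: you derive $E_0>0$ from the equality case of Gagliardo--Nirenberg and $b<0$ from $E(Q)=0$, whereas the paper reads both off from Lemma~\ref{le:blambda} together with the fact that $\phi$ is not a soliton, and your choice of $t_1$ as a maximizer of $\lambda$ is if anything slightly more careful than the paper's phrasing.
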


\begin{proof}
First, from the definition of the minimal mass blow-up solution and Lemma~\ref{le:varia}, we see that either $\phi(t)$ or $-\phi(t)$ satisfies~\eqref{equ:assumdecom} for $t$ close to $T$, with in addition 
\begin{equation*}
    \|\overline{\varepsilon}(t)\|_{H^{1}}\to 0,\quad \mbox{as}\ t\downarrow T.
\end{equation*}
Therefore, from Lemma~\ref{le:decom}, there exists $t_{1}$ close to $T$ such that the function $\phi(t)$ (or $-\phi(t)$) admits a decomposition on $(T,t_{1}]$:
\begin{equation*}
    \phi(t,x)=\frac{1}{\lambda(t)}\left(Q_{b(t)}+\varepsilon\right)\left(t,\frac{x-x(t)}{\lambda}\right).
\end{equation*}
Using again the definition of minimal mass blow-up solution and the smallness estimates in Lemma~\ref{le:decom}, we deduce that 
\begin{equation*}
    \|\nabla \phi(t)\|_{L^{2}}\lesssim \frac{1}{\lambda(t)}\Longrightarrow
    \lambda(t)\to 0,\quad \mbox{as}\ t\downarrow T.
\end{equation*}
Therefore, for $t_{1}$ close to $T$ enough, we directly have  $\lambda(t)\le \lambda(t_{1})$ on $(T,t_{1}]$.

Next, from (ii) of Lemma~\ref{le:blambda} and the fact that $\phi(t)$ is not a soliton, we deduce that 
\begin{equation*}
    E(\phi(t))=E_{0}>0,\quad \mbox{for any}\ t\in (T,t_{1}].
\end{equation*}
Hence, using (i) and (ii) of Lemma~\ref{le:blambda}, we complete the proof of Proposition~\ref{pro:decommini}.
\end{proof}

In the setting of Proposition~\ref{pro:decommini}, we now introduce the following rescaled time:
\begin{equation}\label{equ:defins}
    s(t)=-\int_{t}^{t_{1}}\frac{\dd \sigma}{\lambda^{3}(\sigma)},\quad \mbox{for any}\ t\in (T,t_{1}].
\end{equation}
Note that, even though we consider the backward-in-time evolution of~\eqref{equ:CP}, from the definition of the new time variable $s$ in~\eqref{equ:defins}, the estimates which was established in Section~\ref{SS:Geome} and Section~\ref{SS:Energy} still hold.

\begin{remark}
Using a scaling argument and the resolution of the Cauchy problem, if $\phi(t)$ is a solution blowing up at some finite negative time $T$, then we have 
\begin{equation}\label{est:lowerbound}
    \|\nabla \phi(t)\|_{L^{2}}\gtrsim \frac{1}{(t-T)^{\frac{1}{3}}},\quad \mbox{for any}\ t\in (T,0].
\end{equation}
Indeed, for any fixed $t\in (T,0]$ close to $T$, we consider 
\begin{equation*}
    v(\tau,x)=\frac{1}{\|\nabla \phi(t)\|_{L^{2}}}\phi\left(t+\frac{\tau}{\|\nabla \phi(t)\|^{3}_{L^{2}}},\frac{x}{\|\nabla \phi(t)\|_{L^{2}}   }\right).
\end{equation*}
By an elementary computation, we deduce that $v(\tau,x)$ is a solution of~\eqref{equ:CP} under the variable $(\tau,x)$ and satisfies $\|v(0)\|_{H^{1}}\lesssim 1$. By the local well-posedness for~\eqref{equ:CP}, there exists $\tau_{0}<0$ independent with $t$ such that $v(\tau)$ is well-defined on $[\tau_{0},0]$ and thus we have $T<t+\|\nabla \phi(t)\|_{L^{2}}^{-3}\tau_{0}$ which is the desired result. We mention here that, from the defintion of $s$ in~\eqref{equ:defins} and the lower bound of blow-up rate in~\eqref{est:lowerbound}, we obtain $s(T)=-\infty$ for the both cases of $T$ finite and infinite. 
\end{remark}

Last, for the minimal mass blow-up solution, we establish the exponential decay for the one-dimensional integration over the right-hand side of $y_{1}$.
\begin{proposition}[Decay property]\label{prop:decay}
In the context of Proposition~\ref{pro:decommini}, we have 
\begin{equation*}
   \sup_{T<t\le t_{1}}\int_{\R}\varepsilon^{2}(t,y_{1},y_{2})\dd y_{2}\le A_{1}e^{-\frac{y_{1}}{A_{1}}},\quad \mbox{for any}\ y_{1}>0.
\end{equation*}
Here, $A_{1}>1$ is a universal constant independent with $\phi(t)$.
\end{proposition}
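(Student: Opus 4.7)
The plan is to reduce the sought decay of the $y_2$-marginal of $\varepsilon^2$ to a half-plane mass bound on $\phi$ in the original variables, and then to establish the latter via an almost-monotonicity estimate in the spirit of Lemma~\ref{le:monomass} closed through the minimal-mass Gagliardo-Nirenberg inequality. First, writing $\varepsilon(t,y) = \lambda(t)\phi(t,\lambda(t)y+x(t)) - Q_b(y)$ and using $(a-b)^2 \le 2a^2 + 2b^2$, I get
\begin{equation*}
\int_\R \varepsilon^2(t,y_1,y_2)\,dy_2 \le 2\lambda(t)\int_\R \phi^2(t,x_1(t)+\lambda(t)y_1,x_2)\,dx_2 + 2\int_\R Q_b^2(y_1,y_2)\,dy_2;
\end{equation*}
by Lemmas~\ref{le:nonlocal} and~\ref{le:local}, the $Q_b$ contribution is bounded by $C e^{-2y_1/3}$ for $y_1>0$, independently of $(b,\lambda,t)$.

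Next, to bound the marginal $h(x_1) := \int_\R \phi^2(t,x_1,x_2)\,dx_2$ pointwise, I observe that $h\in W^{1,1}(\R)$ with $h\to 0$ at infinity, so Cauchy-Schwarz yields
\begin{equation*}
h(x_1^*) \le 2\|\phi(t)\|_{L^2(\{x_1>x_1^*\})}\,\|\partial_{x_1}\phi(t)\|_{L^2(\{x_1>x_1^*\})} \lesssim \lambda(t)^{-1}\|\phi(t)\|_{L^2(\{x_1>x_1^*\})},
\end{equation*}
where I used $\|\nabla\phi(t)\|_{L^2}\lesssim\lambda(t)^{-1}$, which is a consequence of Proposition~\ref{pro:decommini}. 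Multiplying by $\lambda(t)$ and setting $x_1^* = x_1(t)+\lambda(t)y_1$, the proof reduces to the uniform half-plane mass bound
\begin{equation*}
\sup_{T<t\le t_1}\ \|\phi(t)\|_{L^2(\{x_1>x_1(t)+\lambda(t)y_1\})}^2 \lesssim e^{-y_1/A}
\end{equation*}
for some universal $A>0$.

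For the half-plane bound I propose to repeat the energy identity of Lemma~\ref{le:monomass} with a weight $\Psi(x_1 - x_1(t)-\lambda(t)y_1)$ that is smooth, increasing from $0$ on the left to $1$ on the right, and translates with (a suitable fraction of) the soliton speed. Computing $\frac{d}{dt}\int\phi^2\Psi\,dx$ via~\eqref{equ:CP} and integration by parts produces three contributions: a non-positive Kato quadratic form $-\int [3(\partial_{x_1}\phi)^2+(\partial_{x_2}\phi)^2]\Psi'\,dx$, a lower-order remainder from $\Psi'''$ and the translation of the cutoff, and the nonlinear term $\tfrac{3}{2}\int\phi^4\Psi'\,dx$. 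The key point is that in the minimal mass regime $\|\phi\|_{L^2}=\|Q\|_{L^2}$, the sharp Gagliardo-Nirenberg inequality recalled in the introduction, applied to a localized version of $\phi$, allows absorbing the nonlinear term into the quadratic form, leaving a one-sided upper bound $\frac{d}{dt}\int\phi^2\Psi\,dx \le O(e^{-y_1/A})$ uniformly on $(T,t_1]$. Integrating backward from $t_1$, together with the fact that $\phi(t_1) = \lambda(t_1)^{-1}(Q_{b(t_1)}+\varepsilon(t_1))((\cdot-x(t_1))/\lambda(t_1))$ is $H^1$-close to a ground state centered at $x(t_1)$ and hence has an exponentially small right tail, gives the desired half-plane bound.

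The main obstacle is the closure of the weighted energy identity in the last step: one must absorb the nonlinear contribution $\tfrac{3}{2}\int\phi^4\Psi'\,dx$ using the minimal mass Gagliardo-Nirenberg inequality and verify that the error generated by the motion of the cutoff (through the $-\dot c\,\Psi'$ term) is controlled by the quadratic form. Here the uniform boundedness of $|x_1(t)-x_1(t_1)|$ on $(T,t_1]$ is crucial and follows from the self-similar lower bound $\lambda(t)\gtrsim(t-T)^{1/3}$ implied by the local well-posedness theory combined with the modulation identity $x_{1,t}\sim\lambda^{-2}$; this ensures the time integration from $t_1$ down to $t\in(T,t_1]$ produces only a bounded shift in the cutoff position, so that the exponential factor $e^{-y_1/A}$ is preserved uniformly in $t$.
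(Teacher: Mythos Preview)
Your reduction to a half-plane mass bound on $\phi$, and the passage from that bound to the $y_2$-marginal of $\varepsilon^2$ via Cauchy--Schwarz and the gradient control $\|\nabla\phi\|_{L^2}\lesssim\lambda^{-1}$, are both correct and coincide with what the paper does. The gap is in the monotonicity step: the virial identity you write down does yield, after absorbing the quartic term, an \emph{upper} bound $\frac{d}{dt}\int\phi^2\Psi\,dx \le O(e^{-y_1/A})$, but integrating this on $[t,t_1]$ with $t<t_1$ produces only a \emph{lower} bound on $\int\phi^2(t)\Psi$, not the upper bound you need. The monotonicity runs the wrong way: mass to the right of the soliton decreases forward in time, hence may grow backward. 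A lower bound on $\frac{d}{dt}\int\phi^2\Psi$ is unavailable because the Kato term $-\int(3(\partial_{x_1}\phi)^2+(\partial_{x_2}\phi)^2)\Psi'\,dx$ is unbounded below as $\|\nabla\phi\|_{L^2}\to\infty$. Two subsidiary points also fail: your claim that $\phi(t_1)$ has an exponentially small right tail is circular, since Proposition~\ref{pro:decommini} only gives $\|\varepsilon(t_1)\|_{H^1}$ small with no spatial decay; and the bound $|x_1(t)-x_1(t_1)|\lesssim 1$ via $\lambda\gtrsim(t-T)^{1/3}$ is only valid for finite $T$, whereas the proposition covers $T=-\infty$ as well.

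The paper resolves the direction problem by anchoring the monotonicity at the \emph{blow-up time} rather than at $t_1$. After rescaling so that $\lambda=1$ along a sequence $t_n\downarrow T$, one applies Lemma~\ref{le:monomass} on $[t,t_n]$ and then lets $t\downarrow T$, using the weak concentration $\phi_n^2(\cdot+x_n)\rightharpoonup \|Q\|_{L^2}^2\,\delta_0$ to evaluate the limit of the \emph{left}-tail mass as exactly $\|Q\|_{L^2}^2\psi_A(-y_0)$. Subtracting from the conserved total mass $\|Q\|_{L^2}^2$ (this is where minimal mass enters decisively) forces the right tail at $t_n$ to be $O(e^{-y_0/A})$. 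Extending from the discrete sequence $\{t_n\}$ to all $t\in(T,t_1]$ requires a further bootstrap showing the almost-monotonicity $\lambda(\bar t)<\tfrac32\lambda(t)$ for $\bar t<t$, which itself uses the energy-virial Lyapunov machinery of Proposition~\ref{prop:energymoni}; this step is what guarantees Lemma~\ref{le:monomass} remains applicable to the rescaled solutions uniformly in $t$.
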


\begin{proof} Following the same procedure as finding the $t_1$ in Proposition~\ref{pro:decommini}, we can construct a decreasing sequence of time $\left\{t_{n}\right\}_{n=1}^{\infty}$ such that
\begin{equation*}
    t_{n}\downarrow T\ \ \mbox{as}\ n\to\infty\quad \mbox{and}\quad 
    \lambda(t)\le \lambda(t_{n})\ \ \mbox{on}\ (T,t_{n}].
\end{equation*}
{\it Step 1. Exponential decay over the sequence of time.} We claim that, 
\begin{equation}\label{est:expn}
    \sup_{n\in \mathbb{N}}\int_{\R^{2}}\varepsilon^{2}\left(t_{n},y_{1},y_{2}\right)\dd y_{2}\le A_{2}e^{-\frac{y_{1}}{A_{2}}},\quad\mbox{for any}\ y_{1}>0.
\end{equation}
Here, $A_{2}>1$ is a universal constant independent with $\phi(t)$.

\smallskip
Indeed, we first denote the following sequence of the time variable $s$:
\begin{equation*}
    s_{n}=s(t_{n})=-\int_{t_{n}}^{t_{1}}\frac{\dd \sigma}{\lambda^{3}(\sigma)},\quad \mbox{for any}\ n\in \mathbb{N}^{*}.
\end{equation*}
We consider the following renormalization of $\phi(t)$:
\begin{equation*}
    \phi_{n}(t',x')=\lambda(s_{n})\phi(\lambda^{3}(s_{n})t',\lambda(s_{n})x'),\ \ \mbox{for any}\ (t',x')\in \left(\frac{T}{\lambda^{3}(s_{n})},\frac{t_{1}}{\lambda^{3}(s_{n})}\right)\times \R^{2}.
\end{equation*}
We denote by $(\lambda_{n},b_{n},x_{n},\varepsilon_{n})$ the geometric parameters and the remainder term associated with the decomposition of $\phi_{n}$ via  Lemma~\ref{le:decom}.
Then, from the definition of decomposition and the construction of the decreasing sequence of time $\left\{t_{n}\right\}_{n=1}^{\infty}$, we have for $n\in\mathbb{N}^*$
\begin{equation*}
    \lambda_{n}(s)=\frac{\lambda(s)}{\lambda(s_{n})},\quad  
    x_{n}(s)=\frac{x(s)}{\lambda(s_{n})}\quad \mbox{and}\quad 
    0<\lambda_{n}(s)\le 1,\quad \mbox{for any}\ s\in (-\infty,s_{n}].
\end{equation*}

Fixing a  $y_{0}>0$ and applying Lemma~\ref{le:monomass} to $\phi_{n}(s)$ over $[s,s_{n}]$ with $x_{0}=y_{0}$, we directly have
\begin{equation*}
\begin{aligned}
    &\int_{\R^{2}}\phi_{n}^{2}(s,x'_{1},x'_{2})\psi_{A}\left(x'_{1}-y_{0}-x_{n1}(s_{n})-\frac{1}{4}\left(x_{n1}(s)-x_{n1}(s_{n})\right)\right)\dd x'_{1}\dd x'_{2}\\
    &\le \int_{\R^{2}}\phi_{n}^{2}(s_{n},x'_{1},x'_{2})\psi_{A}\left(x'_{1}-y_{0}-x_{n1}(s_{n})\right)\dd x'_{1}\dd x'_{2}+Ae^{-\frac{y_{0}}{A}}.
    \end{aligned}
\end{equation*}
On the one hand, we denote $v(s,y)=Q_{b(s)}(y)+\varepsilon(s,y)=\lambda(s)\phi(s,\lambda(s)y+x(s))$. Then, for any $n\in \mathbb{N}^{*}$, we have $v(s_{n},y)=\phi_{n}(s_{n},y+x_{n}(s_{n}))$ and 
\begin{equation*}
\begin{aligned}
   &\int_{\R^{2}}v^{2}(s_{n},y)\psi_{A}\left(y_{1}-y_{0}\right)\dd y_{1}\dd y_{2}\\
   &=\int_{\R^{2}}\phi_{n}^{2}(s_{n},x'_{1},x'_{2})\psi_{A}\left(x'_{1}-y_{0}-x_{n1}(s_{n})\right)\dd x'_{1}\dd x'_{2}.
   \end{aligned}
\end{equation*}
Then, from $(x_{n1})_s>0$ and the function $\psi_{A}$ is decreasing, we find
\begin{equation*}
\begin{aligned}
&\int_{\R^{2}}\phi_{n}^{2}(s,x'_{1},x'_{2})\psi_{A}\left(x'_{1}-y_{0}-x_{n1}(s)\right)\dd x'_{1}\dd x'_{2}\\
&\le \int_{\R^{2}}\phi_{n}^{2}(s,x'_{1},x'_{2})\psi_{A}\left(x'_{1}-y_{0}-x_{n1}(s_{n})-\frac{1}{4}\left(x_{n1}(s)-x_{n1}(s_{n})\right)\right)\dd x'_{1}\dd x'_{2}.
    \end{aligned}
\end{equation*}
Combining the above identities and estimates, we obtain 
\begin{equation}\label{est:exp1}
    \begin{aligned}
        &\int_{\R^{2}}\phi_{n}^{2}(s,x'_{1},x'_{2})\psi_{A}\left(x'_{1}-y_{0}-x_{n1}(s)\right)\dd x'_{1}\dd x'_{2}\\
        &\le \int_{\R^{2}}v^{2}(s_{n},y)\psi_{A}\left(y_{1}-y_{0}\right)\dd y_{1}\dd y_{2}+Ae^{-\frac{x_{0}}{A}}.
    \end{aligned}
\end{equation}
On the other hand, from the definition of $\phi_{n}$ and $\lambda(t)\to 0$ as $t\downarrow T$, we deduce that 
\begin{equation*}
    \phi^{2}_{n}(s,x'+x_{n}(s))\rightharpoonup \left(\int_{\R^{2}}Q^{2}\dd y\right)\delta_{x=0},\quad \mbox{as}\ s\downarrow -\infty.
\end{equation*}
Therefore, as $s\to -\infty$ in~\eqref{est:exp1}, we obtain 
\begin{equation*}
   \left( \int_{\R^{2}}Q^{2}\dd y\right)\psi_{A}(-y_{0})
   \le\int_{\R^{2}}v^{2}(s_{n},y)\psi_{A}\left(y_{1}-y_{0}\right)\dd y_{1}\dd y_{2}+Ae^{-\frac{x_{0}}{A}}.
\end{equation*}
Based on the above estimate and the definition of $\psi_{A}$, 
\begin{equation*}
\begin{aligned}
\int_{\R}\int_{x_{0}}^{\infty}v^{2}(s_{n},y)\dd y_{1}\dd y_{2}
&\le 2 \int_{\R^{2}}v^{2}(s_{n},y)\left(1-\psi_{A}\left(y_{1}-y_{0}\right)\right)\dd y_{1}\dd y_{2}\\
&\le 2\left(\int_{\R^{2}}Q^{2}\dd y\right)\left(1-\psi_{A}(-y_{0})\right)+2Ae^{-\frac{y_{0}}{A}}\lesssim e^{-\frac{y_{0}}{A}}.
\end{aligned}
\end{equation*}
From the exponential decay of $Q_{b}$ on the right side of $y_{1}$, we directly have 
\begin{equation}\label{est:exp2}
\begin{aligned}
    \int_{\R}\int_{y_{0}}^{\infty}\varepsilon^{2}(s_{n},y)\dd y_{1}\dd y_{2}
    &\le 2 \int_{\R}\int_{y_{0}}^{\infty}v^{2}(s_{n},y)\dd y_{1}\dd y_{2}\\
    &+2 \int_{\R}\int_{y_{0}}^{\infty}Q^{2}_{b(s_{n})}(y)\dd y_{1}\dd y_{2}\le A_{3}e^{-\frac{y_{0}}{A_{3}}}.
    \end{aligned}
\end{equation}
Here, $A_{3}$ is a universal constant independent of $\phi(t)$ and $y_{0}$.

\smallskip
Last, from the Fundamental Theorem and the Cauchy-Schwarz inequality,  
\begin{equation*}
    \varepsilon^{2}(s_{n},y_{1},y_{2})\lesssim \left(\int_{y_{1}}^{\infty}\varepsilon^{2}(s_{n},\sigma,y_{2})\dd \sigma\right)^{\frac{1}{2}}\left(\int_{y_{1}}^{\infty}\left|\nabla \varepsilon\right|^{2}(s_{n},\sigma,y_{2})\dd \sigma\right)^{\frac{1}{2}}.
\end{equation*}
Integrating the above estimate under the variable $y_{2}$ and then using~\eqref{est:exp2},  the Cauchy-Schwarz inequality results in
\begin{equation*}
   \int_{\R} \varepsilon^{2}(s_{n},y_{1},y_{2})\dd y_{2}
   \lesssim \left( \int_{\R}\int_{y_{0}}^{\infty}\varepsilon^{2}(s_{n},y)\dd y_{1}\dd y_{2}\right)^{\frac{1}{2}}\|\nabla \varepsilon(s_{n})\|_{L^{2}}
   \lesssim e^{-\frac{y_{1}}{2A_{3}}},
\end{equation*}
which completes the proof of~\eqref{est:expn} for any $n\in \mathbb{N}^{*}$.

\smallskip
{\it Step 2. Conclusion.} For $n\in \mathbb{N}^{*}$, we consider the following bootstrap assumption:
\begin{equation}\label{def:boott}
    t_{n}^{*}=\sup\left\{t\in [t_{n+1},t_{n}]:\lambda(t_{n+1})<\frac{3}{2}\lambda(t)\right\}>t_{n+1}.
\end{equation}
Under the time variable $s$, we denote 
\begin{equation*}
     s_{n}^{*}=s(t_{n}^{*})=-\int_{t_{n}^{*}}^{t_{1}}\frac{\dd \sigma}{\lambda^{3}(\sigma)}>s_{n+1}.
\end{equation*}
We claim that 
\begin{equation}\label{equ:tntn}
    t_{n}^{*}=t_{n}\quad 
    \mbox{and}\quad 
    s_{n}^{*}=s_{n},\quad \mbox{for any}\ n\in \mathbb{N}^{*}.
\end{equation}
Indeed, for any $t\in [t_{n+1},t_{n}^{*}]$, we can use the same argument as in {\it Step 1} to obtain 
    \begin{equation}\label{est:expt}
   \int_{\R^{2}}\varepsilon^{2}\left(t,y_{1},y_{2}\right)\dd y_{2}\le A_{4}e^{-\frac{y_{1}}{A_{4}}},\quad\mbox{for any}\ y_{1}>0.
\end{equation}
Here, $A_{4}>1$ is a universal constant independent of $\phi(t)$. 
Therefore, from the Cauchy-Schwarz inequality, we find
\begin{equation*}
\int_{\R}\int_{0}^{\infty}y_{1}^{100}\varepsilon^{2}(t,y)\dd y_{1}\dd y_{2}\lesssim 1,\quad \mbox{for any}\ t\in [t_{n+1}, t^{*}_{n}].
\end{equation*}
It follows from~Proposition~\ref{pro:decommini} and the Cauchy-Schwarz inequality that 
\begin{equation*}
    \int_{\R}\int_{0}^{\infty}y_{1}^{8}\varepsilon^{2}(t,y)\dd y_{1}\dd y_{2}
    \lesssim  
    \left(\int_{\R}\int_{0}^{\infty}y_{1}^{100}\varepsilon^{2}(t,y)\dd y_{1}\dd y_{2}\right)^{\frac{8}{100}}\|\varepsilon(t)\|_{L^{2}}^{\frac{46}{25}}\lesssim \lambda^{\frac{46}{25}}(t).
\end{equation*}
Combining the above estimate with Proposition~\ref{pro:decommini}, we see that (H1)--(H3) hold on $[t_{n+1},t^{*}_{n}]$ and thus these assumptions hold on $[s_{n+1},s_{n}^{*}]$ under the variable $s$. 
Actually, we could take $t_{1}$ very close to $T$ and thus $\phi(t)$ satisfies (H1)--(H3).
The solution is therefore in the monotonicity regime of Lemma \ref{le:refin} and Proposition~\ref{prop:energymoni}.
Denote $\widetilde{\lambda}(s)=\lambda(s)(1-J_{1}(s))^{2}$. Then, from Lemma \ref{le:refin} and Proposition~\ref{prop:energymoni}, 
\begin{equation}\label{est:lambdati}
    \bigg|\frac{\widetilde{\lambda}_{s}}{\widetilde{\lambda}}+b\bigg|
    \lesssim \mathcal{N}_{0}+b^{2}\Longrightarrow
    \frac{\widetilde{\lambda}_{s}}{\widetilde{\lambda}}\gtrsim -\mathcal{N}_{0}.
\end{equation}
Next,  from (ii) of Lemma~\ref{le:refin}, we deduce that 
\begin{equation*}
    b_{s}+\theta b^{2}=O\left(\mathcal{N}_{0}+|b|\mathcal{N}_{0}^{\frac{1}{2}}+|b|^{3}\right)\Longrightarrow
\frac{1}{2}b^{2}+\frac{b_{s}}{\theta}\lesssim \mathcal{N}_{0}.
\end{equation*}
Based on the above identity and $b_{s}b^{2}=\frac{1}{3}\frac{\dd}{\dd s}\left(b^{3}\right)$, we find
\begin{equation*}
    \int_{s_{n+1}}^{s^{*}_{n}}b^{4}(\sigma)\dd \sigma
    \lesssim 
     |{b(s_{n}^{*})}|^{3}
    +|{b(s_{n+1})}|^{3}
    +\kappa^{2}\int_{s_{n+1}}^{s^{*}_{n}}\mathcal{N}_{0}(\sigma)\dd \sigma.
    \end{equation*}
    On the other hand, from Proposition~\ref{prop:energymoni}, we also find
    \begin{equation*}
        {\mathcal{N}_{1}(s^{*}_{n})}+
        \int_{s_{n+1}}^{s^{*}_{n}}{\mathcal{N}_{0}(\sigma)}\dd \sigma
        \lesssim \mathcal{N}_{1}(s_{n+1})+\int_{s_{n+1}}^{s^{*}_{n}}b^{4}(\sigma)\dd \sigma.
    \end{equation*}
    Combining the above two estimates, we see that
    \begin{equation*}
        \int_{s_{n+1}}^{s^{*}_{n}}b^{4}(\sigma)\dd \sigma
        +\int_{s_{n+1}}^{s^{*}_{n}}\mathcal{N}_{0}(\sigma)\dd \sigma\lesssim
        \mathcal{N}_{1}(s_{n+1})
    +|b(s_{n}^{*})|^{3}+|b(s_{n+1})|^{3}.
    \end{equation*}
   Putting the above estimate together with~\eqref{est:lambdati}, for any $s\in [s_{n+1},s_{n}^{*}]$, we conclude that
    \begin{equation*}
        \log \left(\frac{\widetilde{\lambda}(s)}{\widetilde{\lambda}(s_{n+1})}\right)\gtrsim -\kappa
        \Longrightarrow \frac{\widetilde{\lambda}(s)}{\widetilde{\lambda}(s_{n+1})}\ge e^{O(\kappa)}\Longrightarrow 
        \frac{{\lambda}(s)}{{\lambda}(s_{n+1})}\ge 1+O(\kappa).
    \end{equation*}
 This strictly improves the bootstrap assumption in~\eqref{def:boott} by taking $\kappa$ sufficiently small, thereby completing the proof of~\eqref{equ:tntn}. From the definition of $t_{n}^{*}$ and~\eqref{equ:tntn}, we directly have 
    \begin{equation*}
      0<  \lambda(\bar{t})<\frac{3}{2}\lambda({t}),\quad \mbox{for any}\ T<\bar{t}<{t}<t_{1}
    \end{equation*}
Finally,  using the argument in {\it Step 1} via Lemma~\ref{le:monomass}  again, we complete the proof of Proposition~\ref{prop:decay}.
    \end{proof}

\subsection{End of the proof of Theorem~\ref{thm:main}}

We are in a position to complete the proof of Theorem~\ref{thm:main}. In the remainder of the proof, the implied constants in $\lesssim$ and $O$ can depend on the large constant $B$.

\begin{proof}[End of the proof of Theorem~\ref{thm:main}]
The proof is based on a contradiction argument. Indeed, for the sake of contradiction, we assume that there exists a minimal mass blow-up solution $\phi(t)$ in the sense of Definition~\ref{def:minimalmass}. Then, a contradiction will follow from the following observations.

\smallskip
{\it Step 1. Entering the monotonicity regime.} Note that, from Propositon~\ref{prop:decay} and the Cauchy-Schwarz inequality, we find
\begin{equation*}
\sup_{T<t\le t_{1}}\int_{\R}\int_{0}^{\infty}y_{1}^{100}\varepsilon^{2}(t,y)\dd y_{1}\dd y_{2}\lesssim 1.
\end{equation*}
It follows from~Proposition~\ref{pro:decommini} and Cauchy-Schwarz inequality that 
\begin{equation*}
    \int_{\R}\int_{0}^{\infty}y_{1}^{8}\varepsilon^{2}(t,y)\dd y_{1}\dd y_{2}
    \lesssim  
    \left(\int_{\R}\int_{0}^{\infty}y_{1}^{100}\varepsilon^{2}(t,y)\dd y_{1}\dd y_{2}\right)^{\frac{8}{100}}\|\varepsilon(t)\|_{L^{2}}^{\frac{46}{25}}\lesssim \lambda^{\frac{46}{25}}(t).
\end{equation*}
Combining the above estimate with Proposition~\ref{pro:decommini}, we see that (H1)--(H3) hold on $(T,t_{1}]$ and thus these assumptions hold on $(-\infty,0]$ under the variable $s$. The solution is therefore in the monotonicity regime of Lemma~\ref{le:refin} and Proposition~\ref{prop:energymoni}.

\smallskip
{\it Step 2. Control of $\frac{b}{\lambda^{\theta}}$.} We claim that, for any 
$-\infty<s_{*}<s_{**}<0$,
\begin{equation}\label{est:blambda}
    \left|\frac{b(s_{*})}{\lambda^{\theta}(s_{*})}e^{J({s_{*}})}-
    \frac{b(s_{**})}{\lambda^{\theta}(s_{**})}e^{J(s_{**})}\right|\lesssim \frac{\mathcal{N}_{1}(s_{*})}{\lambda^{\theta}(s_{*})}+
    \frac{b^{2}(s_{*})}{\lambda^{\theta}(s_{*})}+\frac{b^{2}(s_{**})}{\lambda^{\theta}(s_{**})}.
\end{equation}
Indeed, from (ii) of Lemma~\ref{le:refin}, we deduce that 
\begin{equation*}
    b_{s}+\theta b^{2}=O\left(\mathcal{N}_{0}+|b|\mathcal{N}_{0}^{\frac{1}{2}}+|b|^{3}\right)\Longrightarrow
\frac{1}{2}b^{2}+\frac{b_{s}}{\theta}\lesssim \mathcal{N}_{0}.
\end{equation*}
Based on the above identity and $b_{s}|b|=\frac{1}{2}\frac{\dd}{\dd s}\left(|b|b\right)$, we find
\begin{equation*}
    \int_{s_{*}}^{s_{**}}\frac{|b(\sigma)|^{3}}{\lambda^{\theta}(\sigma)}\dd \sigma
    \lesssim 
    \frac{b^{2}(s_{*})}{\lambda^{\theta}(s_{*})}+\frac{b^{2}(s_{**})}{\lambda^{\theta}(s_{**})}
    +\int_{s_{*}}^{s_{**}}\frac{\mathcal{N}_{0}(\sigma)}{\lambda^{\theta}(\sigma)}\dd \sigma.
    \end{equation*}
    On the other hand, from Proposition~\ref{prop:energymoni}, we also find
    \begin{equation*}
        \frac{\mathcal{N}_{1}(s_{**})}{\lambda^{\theta}(s_{*})}+
        \int_{s_{*}}^{s_{**}}\frac{\mathcal{N}_{0}(\sigma)}{\lambda^{\theta}(\sigma)}\dd \sigma\lesssim \frac{\mathcal{N}_{1}(s_{*})}{\lambda^{\theta}(s_{*})}+\int_{s_{*}}^{s_{**}}\frac{b^{4}(\sigma)}{\lambda^{\theta}(\sigma)}\dd \sigma.
    \end{equation*}
    Combining the above two estimates, we see that
    \begin{equation*}
        \int_{s_{*}}^{s_{**}}\frac{|b(\sigma)|^{3}}{\lambda^{\theta}(\sigma)}\dd \sigma
        +\int_{s_{*}}^{s_{**}}\frac{\mathcal{N}_{0}(\sigma)}{\lambda^{\theta}(\sigma)}\dd \sigma\lesssim
        \frac{\mathcal{N}_{1}(s_{*})}{\lambda^{\theta}(s_{*})}
    +\frac{b^{2}(s_{*})}{\lambda^{\theta}(s_{*})}+\frac{b^{2}(s_{**})}{\lambda^{\theta}(s_{**})}.
    \end{equation*}
    Next, from the definition of $J(s)$, we obtain 
    \begin{equation*}
        \left|e^{J(s)}-1\right|\lesssim |J(s)|\lesssim \mathcal{N}_{0}^{\frac{1}{2}}(s)\lesssim \alpha(\kappa)\ll 1,\quad \mbox{for any}\ s\in (-\infty,0).
    \end{equation*}
    Therefore, from the above estimate and (iv) of Lemma~\ref{le:refin}, we have 
\begin{equation*}
\begin{aligned}
&\left|
\frac{b(s_{*})}{\lambda^{\theta}(s_{*})}e^{J(s_{*})}-\frac{b(s_{**})}{\lambda^{\theta}(s_{**})}e^{J(s_{**})}
\right|\\
&\lesssim \int_{s_{*}}^{s_{**}}\left|\frac{\dd}{\dd \sigma}\left(\frac{b}{\lambda^{\theta}}e^J\right)(\sigma)\dd \sigma\right|
\lesssim \frac{\mathcal{N}_1(s_{*})}{\lambda^{\theta}(s_{*})}
+\frac{b^2(s_{*})}{\lambda^{\theta}(s_{*})}+\frac{b^2(s_{**})}{\lambda^{\theta}(s_{**})},
\end{aligned}
\end{equation*}
which means that the estimate~\eqref{est:blambda} is true.

\smallskip
{\it Step 3. Conclusion.} First, from~\eqref{est:blambda}, for any $-\infty<s_{*}<s_{**}<0$  with $-s_{**}$ large enough, we deduce that 
\begin{equation*}
    \frac{|b(s_{**})|}{2\lambda^{\theta}(s_{**})}\le \frac{|b(s_{*})|}{\lambda^{\theta}(s_{*})}
    \le  2\frac{|b(s_{**})|}{\lambda^{\theta}(s_{**})}.
\end{equation*}
On the other hand, from Proposition~\ref{pro:decommini}, for any $s\in (-\infty,0)$, we deduce that
\begin{equation*}
    |b(s)|\lesssim \lambda^{2}(s)E_{0}\Longrightarrow
    \frac{|b(s)|}{\lambda^{\theta}(s)}\lesssim \lambda^{2-\theta}(s)E_{0}.
\end{equation*}
Combining the above two estimates with $\lambda(s)\to 0$ as $s\downarrow -\infty$, we reach a contradiction. At this point, we have proved the nonexistence of $\phi(t)$ and thus the proof of Theorem~\ref{thm:main} is complete.
\end{proof}

\end{document}